\newtheorem{corollary}{Corollary}[section]
\newtheorem{lemma}[corollary]{Lemma}
\newtheorem{proposition}[corollary]{Proposition}
\newtheorem{theorem}[corollary]{Theorem}
\theoremstyle{definition}
\newtheorem{definition}[corollary]{Definition}
\newtheorem*{acknowledgements}{\sc Acknowledgements}
\numberwithin{equation}{section}
\def\XXint#1#2#3{{\setbox0=\hbox{$#1{#2#3}{\int}$ }
\vcenter{\hbox{$#2#3$ }}\kern-.6\wd0}}
\def\mint{\Xint{\rotatebox[origin][30]{$-$}}}
\def \div {\mathop {\rm div}\nolimits}
\def \de {\mathrm{d}}
\def \R {\mathbb R}
\def \Sf {\mathbb S}
\def \Z {\mathbb Z}
\def \N {\mathbb N}
\def \O {\Omega }
\def \C {\mathbb C}
\def \W {W^{2,p}(Q)}
\def \Ws {W^{2,p}_{\#}(Q)}
\def \Gh {\Gamma_h}
\def \Ghh {\Gamma^{\#}_h}
\def \Oh {\Omega_h}
\def \Ohh {\Omega^{\#}_h}
\def \Ad {AD_h}
\def \H {\mathcal{H}}
\def \F {\mathcal{F}}
\def \f {f_{i,N}}
\def \g {g_{i,N}}
\def \hi {h_{i,N}}
\def \hk {h_{k,N}}
\def \him {h_{i-1,N}}
\def \hkm {h_{k-1,N}}
\def \ui {u_{i,N}}
\def \uim {u_{i-1,N}}
\def \uk {u_{k,N}}
\def \ukm {u_{k-1,N}}
\def \Ji {J_{i,N}}
\def \Jim {J_{i-1,N}}
\def \Hi {H_{i,N}}
\def \Ht {\widetilde{H}_{N_k}}
\def \Htn {\widetilde{H}_{N}}
\def \Wt {\tilde{W}_{N_k}}
\def \hti {\tilde{h}_{N_k}}
\def \htin {\tilde{h}_{N}}
\def \utn {\tilde{u}_{N}}
\def \Jt {\tilde{J}_{N_k}}
\def \Bi {B_{i,N}}
\def \Gin {\Gamma_{i,N}}
\def \Gi {G_{i,N}}
\def \Gk {G_{k,N}}
\def \Pi {P_{i,N}}
\def \LiQ {L^{\infty}(Q)}
\def \LpQ {L^{p}(Q)}
\def \LdQ {L^{2}(Q)}
\def \LqQ {L^{q}(Q)}
\newcommand{\PPP}{\color{black}} 
\newcommand{\OOO}{\color{black}} 
\title[Evolution of crystalline thin films by evaporation and condensation]{Evolution of crystalline thin films by evaporation and condensation in three dimensions }
\author[P. Piovano] {Paolo Piovano} 
\address[Paolo Piovano]{Dipartimento di Matematica, Politecnico di Milano, P.zza Leonardo da Vinci 32, 20133 Milano, Italy\footnote{MUR Excellence Department 2023-2027}, \& WPI c/o Research Platform MMM ``Mathematics-Magnetism-Materials'', Fak.\ Mathematik Univ.\ Wien, A1090 Vienna}
\email[P. Piovano]{paolo.piovano@polimi.it}
\author[F. Sapio]{Francesco Sapio}
\address[Francesco Sapio]{Wolfgang Pauli Institute, Oskar-Morgenstern Platz 1, 1090 Wien, Austria}
\email{sapiof23@univie.ac.at}
\subjclass[2010]{37B25, 35K25, 35K30, 35Q74, 49J10, 74K35, 74N20}
\keywords{Thin films, evaporation, condensation, mismatch strain, 3 dimensions, gradient flow, curvature regularization, minimizing movements, existence, regularity.
\\
\indent \emph{Data sharing not applicable to this article as no datasets were generated or analysed during the current study.}
}
\date{\today}
\begin{document}

\begin{abstract}
The morphology of crystalline thin films evolving on flat rigid substrates by condensation of extra film atoms or by evaporation of their own atoms in the surrounding vapor is studied in the framework of the theory of Stress Driven Rearrangement Instabilities (SDRI). By following \PPP the literature \OOO both the elastic contributions due to the mismatch between the film and the substrate lattices at their theoretical  (free-standing) elastic equilibrium, and 
 a curvature perturbative regularization preventing the problem to be ill-posed due to the otherwise exhibited backward parabolicity,  are added in the evolution equation. The resulting Cauchy problem under investigation consists in an anisotropic mean-curvature type flow of the fourth order on the film profiles, which are  assumed to be parametrizable as graphs of functions measuring the film thicknesses, 
 coupled with a quasistatic elastic problem in the film bulks. \PPP  The existence of a regular solution for a finite period of time  is established under periodic boundary conditions by means of employing minimizing movements to exploit the gradient-flow structure of the evolution equation. \OOO 


\end{abstract}

\maketitle


\section{Introduction}
 In this paper we study the morphological evolution of crystalline thin films deposited on flat rigid substrates by \emph{condensation} of extra film atoms from a surrounding vapor, which results in a film growth, or by \emph{evaporation} in such vapor of their own film atoms, which  triggers instead a film dissolution. 
 Besides on the phenomena at the film surface,  the focus is  on the elastic properties of the film bulk material, which is subject to the so-called \emph{epitaxial strain}  imparted by the underlying substrate. \PPP Such a strain is a consequence of the literature modeling assumptions \cite{FG:2004,D:2001,spencer1997equilibrium,Srolovitz:1989}  under which, \OOO on the one hand, the film and the substrate material are prescribed to adhere at their contact interface without delamination or debonding, \PPP and,  on the other hand,  the crystalline  lattices characterizing the substrate and the film free-standing elastic equilibrium are allowed to present a mismatch of their lattice parameters. \OOO

  \PPP The main result of the paper is the existence of a regular quasistatic solution, namely of a solution presenting a regular profile and being at elastic equilibrium in the bulk at each time (see Theorem \ref{teoconvfin}) for a finite period of time. Therefore,  \OOO the  investigations carried out in this paper can be seen both as a generalization in the three dimensional setting, which is the physically relevant one for the applications, of the  results previously achieved in \cite{PP}, and as an equivalent of the \PPP evolution existence result achieved in \OOO  \cite{FFLM3} (see also \cite{FFLM2-1}) for the complementary setting in which the film evolution is not influenced  by the evaporation-condensation process here considered, but it is entirely due to the volume preserving \emph{surface-diffusion process}, which is instead here neglected. We notice that\PPP, as a crucial difference with respect to \cite{FFLM3}, \OOO in our setting the film volume can suddenly change directly interfering with the elastic properties of the deposited film material \PPP and affecting the possibility of developing a direct counterpart  of the stability analysis carried out in  \cite{FFLM3} for configurations characterized by a flat profile (with a nonzero  film volume). \OOO 
  
 \PPP Furthermore, we remind that the extension of the existence result contained in this paper  to every time is actually not expected to hold for solutions with such regular profiles in the general setting here tackled, which takes into account both elasticity and  the possibility of highly  anisotropic non-convex surface tensions for the film-vapor interface. In fact, as recovered in the static case in \cite{BCh:2002,CF:2020_arma,DP:2018_2,FFLM2,KrP}  for related models in agreement with direct observations, various surface instabilities and bulk cracks may develop as a further strain relief mechanism apart from the bulk material deformation.  Moreover, in \cite{DP:2018_1} it is also shown for related models that, without restricting to surface tensions in the \emph{wetting regime},  film profiles touch the substrate surface in a nonzero Young-Dupr\'e angle, which corresponds to an only Lipschitz regularity for the film profile, also in the presence of elasticity  when the material of the film is less rigid of the one of the substrate. \OOO


\PPP The \OOO reference framework for \PPP such static models \OOO is provided in the literature by the theory of \emph{Stress Driven Rearrangement Instabilities} (SDRI) \cite{AT:1972,D:2001,FG:2004,Gr:1993,Srolovitz:1989} of which  thin films have  historically been the SDRI foremost example, \PPP exactly \OOO because of the interplay occurring \PPP among the various film strain-relief modes with respect to \OOO the film surface and \PPP the \OOO elastic energy: 
 \PPP The \OOO bulk deformation is energetically neutral with respect to the surface energy, but is payed in terms of the elastic energy, while the destabilization of the film free-boundary from the \emph{Winterbottom optimal shape} \cite{PV2, PV, Winterbottom} with surface roughness and interface instabilities positively contributes to the surface energy without storage of elastic energy. A delicate compromise between these competing stress-relief mechanisms must therefore be achieved, of which the film morphology is the result. 
\PPP Notice that a \OOO microscopic justification of  \PPP such \OOO static models starting from atomic interactions has been provided  in \cite{KrP}, while existence and regularity results for dimension $d=2$ are available in \cite{BCh:2002,DP:2018_1,DP:2018_2, FFLM:2011,FFLM2, FM2,KP1, KP2} and have been (partially) extended in higher dimensions for models related to the more general setting of SDRI in \cite{CF:2020_arma,KP3}.

In regard of the evolution theory, the reference goes back to W. W. Mullins \cite{Mul-1,Mul-2}, who identify the equations describing the motion of a crystalline interface  $\Gamma\subset\R^d$ by the evaporation-condensation process and by surface diffusion as the \emph{motion by mean curvature}, i.e.,
\begin{equation}\label{1}
    V=- H \qquad \text{on $\Gamma$,}
\end{equation}
and the \emph{motion by the Laplacian of the mean curvature}, i.e.,
 \begin{equation}\label{2}
V=\Delta_{\Gamma}H \qquad \text{on $\Gamma$,}
\end{equation}
respectively, where $V$ denotes the normal velocity on $\Gamma$, $H$ is the mean curvature on $\Gamma$, and $\Delta_{\Gamma}$ represents the tangential Laplacian along $\Gamma$. 
As described in  \cite{PP} by taking into account the film-vapor interface anisotropy $\psi$  and the elastic contribution, Equations \eqref{1} and  \eqref{2} become
\begin{equation}\label{1bis}
  V=-\mathrm{div}_{\Gamma}(D\psi(\nu))-W(Eu)\qquad \text{on $\Gamma$},
\end{equation}
and
 \begin{equation}\label{2bis}
V = \Delta_{\Gamma}[\mathrm{div}_{\Gamma}(D\psi(\nu))+W(Eu)] \qquad \text{on $\Gamma$},
\end{equation}  
(see  \cite{AG, FG:2004,GS,GSS,H,H2} and \cite[Remark 3.1, Section 8]{G} for more details), where $u(\cdot,t)$ is the elastic equilibrium in the  region $\Omega\subset\R^d$, $Eu$ represents the \emph{strain} and is the symmetric part of the gradient $Du$, $W$ is the elastic energy density, which is defined as
    \begin{equation*}
        W(A):=\frac{1}{2}\C A:A
    \end{equation*}
   for every $A\in \R^{2\times 2}$ and for  a symmetric positive fourth-order tensor  $\C$, $\mathrm{div}_{\Gamma}$ is the tangential divergence along $\Gamma\subset \partial \Omega$, and $\nu$ is the outward unit normal of $\partial\Omega$.   

We notice that \eqref{2bis} has been tackled   in  \cite{FJM,FJM2} by means of fixed point techniques to prove existence and uniqueness results. More precisely,  in  \cite{FJM}  the authors study  \eqref{2bis} for  $d=2$ and obtain short time existence and uniqueness, establishing the global-in-time  existence for a specific class of initial data, while in \cite{FJM2} they prove  short-time existence of a smooth solution and uniqueness for  $d=3$ (with the bulk contribution given by a forcing term, which also include the elastic setting). Furthermore,  in the absence of elasticity the strategy based on \emph{minimizing movements} has been extensively used previously in the literature  to treat geometric flows of the type \eqref{1} and \eqref{2}, also in order to establish  global-in-time existence.  Regarding  the mean curvature flow, without aiming at a comprehensive description of the results by minimizing movements we just mention here that \PPP  in  \cite{CMNP,CMP} \OOO the authors obtain global-in-time existence and uniqueness for anisotropic \PPP and crystalline mean curvature flows in all dimensions also \OOO for arbitrary (possibly unbounded) initial sets and for the natural mobility, while \PPP in \cite{J,MSS} \OOO global-in-time existence of the flat solutions for a mean curvature flow with volume preserving is established.


Regarding the literature results for the Equations  \eqref{1bis} and \eqref{2bis} carried out by minimizing movements in the presence of elasticity we refer to  \cite{FFLM2-1,FFLM3,PP} where the setting of \emph{evolving graphs} is considered under an extra perturbative \emph{regularization term} added  to the equations. More precisely, in  \cite{FFLM2-1,FFLM3,PP} admissible film  profiles  are constrained to be parametrizable as  graph $\Gamma_h$ of  functions $h:Q\times [0,T]\rightarrow [0,+\infty)$ measuring the thickness of the film, where $Q:=(0,\ell)^{d-1}$ and $ [0,T]$ relate, for given  $\ell>0$ and a time $T>0$,  to the spatial and the temporal variable, respectively. 
Furthermore, in \cite{FFLM2-1,FFLM3,PP} a regularization term is added on  the basis of the argumentation already present in \cite{AG,H,H2} to overcome the fact that Equations \eqref{1bis} and \eqref{2bis} are backward parabolic in the presence of highly  anisotropic non-convex surface tension $\psi$, making the related Cauchy problem ill-posed. By adding such regularization term Equations \eqref{1bis} and \eqref{2bis} become
\begin{equation}\label{1tris}
  V=-\mathrm{div}_{\Gamma}(D\psi(\nu))-W(Eu)+\varepsilon R(k_1,k_2) \qquad \text{on $\Gamma_h$}
\end{equation}
and 
 \begin{equation}\label{2tris}
V = \Delta_{\Gamma}[\mathrm{div}_{\Gamma}(D\psi(\nu))+W(Eu)-\varepsilon R(k_1,k_2)] \qquad \text{on $\Gamma_h$}, 
\end{equation}  
respectively, where $\varepsilon>0$ and $R$ is a function depending on the principal curvatures $k_1,k_2$ of $\Gamma_h$. By defining $R$ (also on the basis of \cite{DGP})  as
\begin{equation}\label{reg_term}
    R(k_1,k_2):=\left( \Delta_{\Gamma}(|H|^{p-2}H)-|H|^{p-2}H\left(k_1^2+k_2^2-\frac{1}{p}H^2\right)\right)
\end{equation}
(where we recall that $H:=(k_1 + k_2)/2$)    with $p=2$ for $d=2$ in \cite{FFLM2-1,PP} and with $p>2$ for $d=3$ in \cite{FFLM3}, the backward parabolicity of the equation is avoided as surface patterns with large curvature get penalized. As such, minimizing movements are then used  in \cite{FFLM2-1,FFLM3,PP} so that  the \emph{gradient-flow structure} exhibited by Equations \eqref{1tris}  and \eqref{2tris} can be exploited:  by considering the functional 
$$
    \F(h,u_h):=\int_{\Omega_h}W(Eu)\de z+\int_{\Gamma_h}\left(\psi(\nu)+\frac{\varepsilon}{p}|H|^p\right)\de \mathcal{H}^2,
$$
where $u_h$ denotes the elastic equilibrium in $\Omega_h$ (under the proper periodic and boundary conditions)  and $\mathcal{H}^2$ denotes the two-dimensional Hausdorff measure, then Equations \eqref{1tris}  and \eqref{2tris} formally coincide with the gradient flow of $\F$ with respect to  an $L^2$- and $H^{-1}$-Riemannian structure. This allowed to establish short-time existence of a regular solution of \eqref{1tris} in  \cite{PP}  for $d=2$, and of \eqref{2tris}  in \cite{FFLM2-1} and \cite{FFLM3} for $d=2$ and  $d=3$, respectively. It remains open the case of \eqref{1tris} for $d=3$ that we intend here to tackle by also choosing, as in  \cite{FFLM3}, $p>2$ in  \eqref{reg_term}.

Therefore, by recalling that the normal velocity parametrized as the graph of the thickness functions $h$ is given by
\begin{equation}\label{Vparametrized}
 V=   \frac{1}{\sqrt{1+|Dh|^2}}\frac{\partial h}{\partial t}
\end{equation}
on $\Gamma_h$, where $Dh$ denotes the gradient with respect to the spatial coordinates,  
 the  \emph{Cauchy problem} under investigation depending on the period of time $T>0$ is the following:
\begin{align}\label{syst}
   \begin{cases}
      \frac{1}{\sqrt{1+|Dh|^2}}\frac{\partial h}{\partial t}=-\mathrm{div}_{\Gamma}(D\psi(\nu))-W(Eu)\\
   \hspace{2.5cm}+\varepsilon\left( \Delta_{\Gamma}(|H|^{p-2}H)-|H|^{p-2}H\left(|B|^2-\frac{1}{p}H^2\right)\right) &\quad\text{in $\R^2\times[0,T]$, }\\
    \mathrm{div}(\mathbb{C} Eu)=0 &\quad\text{in $\Omega_h$},\\
    \mathbb{C}Eu[\nu]=0 &\quad\text{on $\Gamma_h$},\\
    u(x,0,t)=(e^1_0x_1,e^2_0x_2,0),\\
    h(\cdot,t) \text{ and } Du(\cdot,t) \text{ are $Q$-periodic},\\
    h(\cdot,0)=h_0,
       \end{cases}
\end{align}
where 
$e_0:=(e_0^1,e_0^2)$ with $e_0^i>0$ is a vector which represents the mismatch between the crystalline lattices of the film and the substrate, $h_0$ is an admissible profile of the film at the initial time $t=0$, and spatial $Q$-periodic conditions are considered. We notice that the  Cauchy problem on the period of time $T>0$ considered in \cite{FFLM3} coincides with \eqref{syst} if we replace the first equation in \eqref{syst} with \eqref{2tris} (by taking into account \eqref{Vparametrized}). 

We can now more precisely detail the achieved result \PPP by \OOO referring \OOO  instead  to Section \ref{main_results} for \PPP the \OOO full characterization:  In Theorem \ref{teoconvfin} we establish the existence of a time $T_0>0$ for which the Cauchy problem \eqref{syst} admits a  solution in $[0,T_0]$, which is said to be \emph{regular} in the meaning that the first equation in \eqref{syst} is satisfied a.e.\ in $\R^2\times[0,T_0]$ (see Definition \ref{def_sol})\PPP.\OOO 

\subsection{Organization of the paper and description of the method} In Section \ref{sec2} we \PPP set \OOO the notation adopted throughout the paper. In Section \ref{sec3} we introduce the precise mathematical setting of the model, we introduce the minimizing-movement scheme obtained by discretizing the time interval and by defining the incremental family of minimum problems \eqref{min-prob} at each discrete time\PPP, and we state the main theorem of the paper (see Theorem \ref{teoconvfin}). \OOO In Section \ref{sec4}  we  \PPP begin by showing in Theorem \ref{teo-Min} \OOO the existence of minimizers for \eqref{min-prob}  so that a discrete-time evolution \PPP of the type of \OOO Definition \ref{disc-sol} can be constructed, \PPP and \OOO then we analyze \PPP its \OOO convergence properties in Theorems \ref{teo_h_conv} and \ref{teo_h_conv2}, \PPP ehich \OOO then are improved in Theorem  \ref{thm-t0} by selecting a specific time $T_0>0$.  In Section \ref{sec5} we prove in Theorem \ref{teoconvfin} the existence of a regular solution in the time interval $[0,T_0]$ by passing, in view of the previously proven convergence properties, to  the discrete \emph{Euler-Lagrange equation} \eqref{EL2}  satisfied by the minimizers of the incremental minimum problems \eqref{min-prob} (see Lemma \ref{EL-esp}), from which in Theorem \ref{boundD2H} we are able  to deduce a crucial uniform bound on the mean curvature \PPP needed to reach Theorem \ref{teoconvfin}. \OOO 
Finally, in Section \ref{sec7} we collect some auxiliary results often used in the paper for the Reader's convenience.

\bigskip\bigskip

\section{Notation}\label{sec2}
In this section we set the main notation used throughout the paper. The space of $m\times d$ matrices with real entries is denoted by $\R^{m\times d}$, and, in case $m=d$, the subspace of symmetric matrices is denoted by $\mathbb R^{d\times d}_{sym}$. Given a function $u\colon\R^d\to\R^m$, we denote its Jacobian matrix by $Du$, whose components are $(Du)_{ij}:= \partial_j u_i$ for $i=1,\dots,m$ and $j=1,\dots,d$, and when $u\colon \R^d\to\R^d$, we also denote by $Eu$ the symmetric part of the gradient, i.e., $Eu:=\frac{1}{2}(D u+D u^T)$. Given a tensor field $A\colon \R^d\to\R^{m\times d}$, by $\div A$ we mean its divergence with respect to the rows, namely $(\div A)_i:= \sum_{j=1}^d\partial_jA_{ij}$ for $i=1,\dots,m$. 

The norm of a generic Banach space $X$ is denoted by $\|\cdot\|_X$ and in case $X$ is a Hilbert space, we denote by $\langle\cdot,\cdot\rangle_X$ its inner product. In the case $X=\R^d$, we simplify the notation by using the symbol $\langle\cdot,\cdot \rangle$ to denote the euclidean scalar product, that is $\langle v,w\rangle:=\sum_{i=1}^d v_iw_i$ for every $v,w\in\R^d$, while $A:B:=\sum_{i,j=1}^d a_{ji}b_{ij}$ denotes the Hilbert-Schmidt product of two matrices $A,B\in \R^{d\times d}$. Given two matrices $A\in \R^{m\times d}$ and $B\in \R^{n\times q}$ we denote by $A\otimes B\in\R^{mn\times dq}$ the Kronecker product between the two matrices $A$ and $B$, defined as the block matrix $A\otimes B:=(a_{ij}B)_{i,j}$. Given two Banach spaces $X_1$ and $X_2$, the space of linear and continuous maps from $X_1$ to $X_2$ is denoted by $\mathscr L(X_1;X_2)$. For any $\mathbb A\in\mathscr L(X_1;X_2)$ and $u\in X_1$, we indicate the image of $u$ under $\mathbb A$ with $\mathbb A u\in X_2$. 

We denote the $d$-dimensional Lebesgue measure by $\mathcal L^d$ and the $(d-1)$-dimensional Hausdorff measure by $\mathcal H^{d-1}$. Given a bounded open set $\Omega$ with Lipschitz boundary, $\nu$ indicates the outer unit normal vector of $\partial\Omega$, which is defined $\mathcal H^{d-1}$-a.e.\, and we employ the usual definition of Lebesgue and Sobolev spaces on $\Omega$. The values of Sobolev functions on the boundary of their set of definition are always intended in the sense of traces. Given a set $U$, then $U^2:=U\times U$ with $\times$ denoting the cartesian product. Finally, given an open interval $(a,b)\subset\R$ and $p\in[1,\infty]$, we denote by $L^p(a,b;X)$ the space of $L^p$ functions from $(a,b)$ to $X$. We use $H^k(a,b;X)$ and $W^{k,p}(a,b;X)$ to denote the Sobolev space of functions from $(a,b)$ to $X$ with $k$ weak derivatives in $L^2(a,b;X)$ and $L^p(a,b;X)$, respectively.

\bigskip\bigskip

\section{Mathematical setting and main results}\label{sec3}
In this section we introduce the model, the main definitions, and the \PPP formal statement of the main result. \OOO 

\subsection{The variational model} Let us define $Q:=(0,\ell)^2$ with $\ell>0$. For $p>2$ we denote by $\Ws$ the space of all functions of $\W$ whose $Q$-periodic extension belong to $W^{2,p}_{loc}(\R^2)$, and we define the class of admissible profiles as
\begin{equation*}
    AP:=\{h\in\Ws:h(x)\geq 0\},
\end{equation*}
where we use the notation $x=(x_1,x_2)\in Q$.

Moreover, for $h\in\Ws$ we refer to the sets
\begin{equation}\label{defoh}
    \Gh:=\{z=(x,h(x)):x\in Q\}\text{ and }\Oh:=\{z=(x,y)\in Q\times \R:0<y<h(x)\}
\end{equation}
as the \emph{film profile} and the \emph{region of the film with height} $h$, respectively, while the corresponding sets with $Q$ replaced by $\R^2$ are denoted by $\Ghh$ and $\Ohh$. We define the \emph{family of periodic displacements} by
\begin{equation*}
    PD:=\{u:\Ohh\rightarrow\R^3:u(x,y)=u(x+\ell k,y)\text{ for every }(x,y)\in\Ohh \text{ and }k\in \Z^2\}
\end{equation*}
and the \emph{the family of admissible displacements} by
\begin{equation*}
    \Ad:=\{u\in L^2_{loc}(\Ohh,\R^3)\cap PD:u(x,0)=(e_0^1x_1,e_0^2x_2,0),\text{ } Eu_{|\Oh}\in L^2(\Oh,\R^3)\},
\end{equation*}
where $e_0:=(e_0^1,e_0^2)$, with $e_0^1,e_0^2>0$, is the vector representing the mismatch between the free-standing equilibrium crystalline lattice of the film and the substrate materials. Consequently, the \emph{family of admissible configurations} is
\begin{equation*}
    X:=\{(h,u):h\in AP\text{ and } u\in AD_h\}.
\end{equation*}
The \emph{elastic energy density} $W:\R^{2\times 2}_{sym}\rightarrow [0,\infty)$ is defined by
\begin{equation*}
    W(M):=\frac{1}{2}\C M:M,
\end{equation*}
where $\C$ is a fourth-order tensor such that there exists $k>0$ for which
\begin{equation*}
    \C M:M\geq 2k M:M \quad \text{for every $M\in \R^{2\times 2}_{sym}$}.
\end{equation*}

Furthermore, let $\psi:\R^3\rightarrow[0,+\infty)$ be a function of class $C^2$ on $\R^3\setminus\{0\}$ and positively one-homogeneous, which satisfies
\begin{equation}\label{fins}
    \frac{1}{c}|\xi|\leq\psi(\xi)\leq c|\xi|
\end{equation}
for every $\xi\in\R^3$ and for some positive constant $c$.

The \emph{configuration energy functional} $\F:X\rightarrow[0,\infty]$ is given by
\begin{equation*}
    \F(h,u):=\mathcal{W}(h,u)+\mathcal{S}(h)
\end{equation*}
for every admissible configuration $(h,u)\in X$, where $\mathcal{W}:X\rightarrow[0,\infty]$ and $\mathcal{S}:AP\rightarrow[0,\infty]$ represent the \emph{elastic} and the \emph{surface energy}, respectively. The elastic energy is defined by
\begin{equation*}
    \mathcal{W}(h,u):=\int_{\Oh}W(Eu)\de z 
\end{equation*}
while the surface energy is defined by
\begin{equation*}
    \mathcal{S}(h):=\int_{\Gh}\left(\psi(\nu)+\frac{\varepsilon}{p}|H|^p\right)\de \H^2,
\end{equation*}
where $\nu$ is the outer unit normal to $\Oh$, $\varepsilon$ is a positive constant, and $H=\div_{\Gh}\nu$ denotes the sum of the principal curvatures of $\Gh$, i.e.,
\begin{equation}\label{def-cur}
    H=-\div\left(\frac{Dh}{\sqrt{1+|Dh|^2}}\right)\quad \text{ in $Q.$}
\end{equation}
Notice that from \eqref{def-cur} it follows that
\begin{equation*}
    \int_Q H\de x=0.
\end{equation*}

\begin{definition}[Elastic equilibrium] We say that $\Bar{u}\in AD_h$ is the \emph{elastic equilibrium} of $\Bar{h}\in AP$ if
\begin{equation*}
    \F(\Bar{h},\Bar{u})=\min\{\F(\Bar{h},u):(\Bar{h},u)\in X\}.
\end{equation*}    
\end{definition}
\noindent Notice that the elastic equilibrium exixts for every $\Bar{h}\in AP$ and it is unique in view of the Dirichlet condition.


\subsection{The incremental minimum problem} Let $(h_0, u_0)\in X$ be such that $h_0>0$ and $u_0$ is the elastic equilibrium of $h_0$, and let $\Lambda_0$ be a positive constant such that
\begin{equation}\label{boundDh}
    \|h_0\|_{C^1_{\#}(Q)}<\Lambda_0.
\end{equation}
By considering a sequence $\tau_N\searrow 0$, for every $i\in \N$ we define inductively $(\hi,\ui)$ as a solution to the following minimum problem:
\begin{equation}\label{min-prob}
    \min\{\Gi(h,u):(h,u)\in X, \text{ }\|Dh\|_{\LiQ}\leq \Lambda_0\}.
\end{equation}
The functional $\Gi$ is given by
\begin{equation}\label{fun-G}
    \Gi(h,u):=\F(h,u)+\Pi(h),
\end{equation}
with the \emph{penalization term} $\Pi$ defined by
\begin{equation}\label{pen}
    \Pi(h):=\frac{1}{2\tau_N}\int_{\Gamma_{\him}}\left(\frac{h-\him}{\Jim}\right)^2\de\H^2=\frac{1}{2\tau_N}\int_{Q}\frac{(h-\him)^2}{\Jim}\de x,
\end{equation}
where $\Jim:=\sqrt{1+|D\him|^2}$. It is proved in Theorem \ref{teo-Min} that problem \eqref{min-prob} admits a minimizer. 

\begin{definition}[Dicrete-time evolution]\label{disc-sol}
Let $(h_0,u_0)\in X$ be an initial configuration and for $i,N\in\N$ let $(\hi,\ui)$ be a solution to \eqref{min-prob}. We refer to the piecewise linear interpolation $h_N:\R^2\times \R\rightarrow[0,\infty)$, given by
\begin{equation}\label{linear_h}
    h_N(x,t):=\hi(x)+\frac{1}{\tau_N}(t-(i-1)\tau_N)(\hi(x)-\him(x))
\end{equation}
for every $(x, t)\in  \R^2 \times [(i-1)\tau_N,i\tau_N]$ and $i\in\N$, as a \emph{discrete-time evolution} of the incremental minimum problem \eqref{min-prob}. Moreover, we denote with $u_N(\cdot,t)$ the elastic equilibrium corresponding to $h_N(\cdot,t)$.
\end{definition}

In Theorem \ref{teoconvfin} we make use of a different type of interpolation of solution to \eqref{min-prob}. 

\begin{definition}
    Let $(h_0,u_0)\in X$ be an initial configuration. Given a solution $(\hi,\ui)$ for $i,N\in\N$ to \eqref{min-prob}, we define the piecewise constant interpolations $\tilde{h}_N:\R^2\times \R\rightarrow[0,\infty)$ and $\tilde{u}_N:\R^3\times \R\rightarrow \R^3$ in the following way:
\begin{align}
    &\tilde{h}_N(x,t):=\hi(x)&&\text{for }(x, t)\in  \R^2 \times [(i-1)\tau_N,i\tau_N)\text{ and }i\in\N,\label{constant_h}\\
    &\tilde{u}_N(x,y,t):=\ui(x,y) &&\text{for }(x,y,t)\in\Omega_{\hi}\times[(i-1)\tau_N,i\tau_N)\text{ and }i\in\N\label{constant_u}
\end{align}
\end{definition}

\PPP \subsection{Main result}\label{main_results} 
We begin by providing a  formalization the notion of \emph{regular solutions} for the evolution  problem \eqref{syst}.  \OOO

\begin{definition}[Solutions of the evolution problem]\label{def_sol}
Let $(h_0,u_0)\in X$ be a configuration satisfying \eqref{boundDh} and $T>0$. We say that a function $h\in L^{\infty}(0, T ; \Ws)\cap H^1(0, T ; L^2_{\#}(Q))$ such that 
\begin{enumerate}[label=\textnormal{(\roman*)}]
\item $h(\cdot, 0) = h_0(\cdot)$ in $Q$,
    \item $-\div_{\Gamma}(D\psi(\nu))-W(Eu)+\varepsilon\left(\Delta_{\Gamma}(|H|^{p-2}H)-|H|^{p-2}H\left(|B|^2-\frac{1}{p}H^2\right)\right)\in L^2(0,T;L^2_{\#}(Q))$,
    \item the equation 
    \begin{equation*}
    \frac{1}{J}\frac{\partial h}{\partial t}=-\div_{\Gamma}(D\psi(\nu))-W(Eu)+\varepsilon\left(\Delta_{\Gamma}(|H|^{p-2}H)-|H|^{p-2}H\left(|B|^2-\frac{1}{p}H^2\right)\right)
\end{equation*}
is satisfied for $\mathcal{L}^3$-a.e.\ in $Q\times (0,T)$,
\end{enumerate}
where $J:=\sqrt{1+|Dh|^2}$, $\Gamma:=\Gamma_{h(\cdot,t)}$, $u(\cdot, t)$ is the elastic equilibrium in $\Omega_{h(\cdot,t)}$, and $W(Eu)$ denotes the trace of $W(Eu)$ on $\Gamma_{h(\cdot,t)}$, is a \emph{regular solution} to \eqref{syst} in $[0,T]$ with initial datum $h_0$.
\end{definition}

\PPP The main result of the manuscript asserts that a regular solution of \eqref{syst} exists for a short period of time. \OOO

\begin{theorem}[Short time existence of a regular solution]\label{teoconvfin}
    Let $h_0\in AP$ with $h_0>0$ and such that $h_0$ satisfies \eqref{boundDh}. There exist $T_0>0$ and a regular solution $h$ to \eqref{syst} in $[0,T_0]$ with initial datum $h_0$ in the sense of Definition \eqref{def_sol}. Moreover, there exist a nonincreasing function $g$ and a negligible set $Z_0$ such that
    \begin{equation}\label{ugF}
        \F(h(\cdot,t),u_h(\cdot,t))=g(t) \quad \text{for every $t\in[0,T_0]\setminus Z_0$}
    \end{equation}
    and
    \begin{equation}\label{disF}
        \F(h(\cdot,t),u_h(\cdot,t))\leq g(t) \quad \text{for every $t\in Z_0$}.
    \end{equation}
\end{theorem}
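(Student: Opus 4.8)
The plan is to run the De Giorgi minimizing‑movements scheme for the energy $\F$ and to pass to the limit in the associated time‑discrete Euler--Lagrange equation. First I would fix $\tau_N\searrow 0$, use Theorem \ref{teo-Min} to solve the incremental problems \eqref{min-prob}, obtaining minimizers $(\hi,\ui)$, and form the interpolants of Definition \ref{disc-sol}. Testing the minimality of $(\hi,\ui)$ against the competitor consisting of $\him$ and its elastic equilibrium yields the discrete energy inequality
\[
\F(\hi,\ui)+\tfrac{1}{2\tau_N}\int_Q\tfrac{(\hi-\him)^2}{\Jim}\,\de x\le\F(\him,\uim),
\]
so that $\F$ is nonincreasing along the scheme and, summing over $i$, $h_N$ is bounded in $H^1(0,T;\LdQ)$. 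Combining this with the $|H|^p$ term of the surface energy and the constraint $\|Dh\|_{\LiQ}\le\Lambda_0$ produces a uniform bound of $h_N$ in $L^\infty(0,T;\Ws)$; standard compactness (an Ascoli argument together with an Aubin--Lions type argument) then gives a limit $h$ with $h_N\rightharpoonup h$ in $H^1(0,T;\LdQ)$ and $h_N\to h$ in $C^{0,\beta}([0,T];C^{1,\alpha}_{\#}(Q))$, which is the content of Theorems \ref{teo_h_conv} and \ref{teo_h_conv2}.

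The next, and decisive, preliminary step is the selection of $T_0$ in Theorem \ref{thm-t0}: the uniform Hölder‑in‑time continuity of $h_N$ in $C^{1,\alpha}_{\#}(Q)$ together with \eqref{boundDh} forces $\|h_N(\cdot,t)\|_{C^1_{\#}(Q)}<\Lambda_0$ for $t\in[0,T_0]$, uniformly in $N$; hence on $[0,T_0]$ the gradient constraint in \eqref{min-prob} is inactive and each $(\hi,\ui)$ is a \emph{free} critical point of $\Gi$. Therefore, on $[0,T_0]$, the minimizers satisfy the unconstrained discrete Euler--Lagrange equation \eqref{EL2} of Lemma \ref{EL-esp}, whose left‑hand side is $\tfrac{1}{\tau_N}\tfrac{\hi-\him}{\Jim}$ — the natural discretization of $J^{-1}\partial_t h$ — and whose right‑hand side is $-\div_{\Gamma}(D\psi(\nu))-W(E\ui)+\varepsilon\big(\Delta_{\Gamma}(|H|^{p-2}H)-|H|^{p-2}H(|B|^2-\tfrac1p H^2)\big)$ evaluated on $\Gamma_{\hi}$.

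To pass to the limit I would treat the terms separately. The anisotropy term $\div_{\Gamma}(D\psi(\nu))$ and the lower‑order curvature term converge because $H_N\to H$ a.e.\ and $\nu_N\to\nu$ strongly, which follow from the $C^{1,\alpha}$‑convergence and the weak $W^{2,p}$‑convergence of $h_N$ together with the uniform bounds; the elastic trace $W(E\ui)$ converges using continuity of the elastic equilibrium under $C^{1,\alpha}$‑convergence of the domains $\Oh$ (the same ingredient already needed for lower semicontinuity of $\F$); and the left‑hand side converges weakly in $\LdQ$ to $J^{-1}\partial_t h$, thanks to the $H^1(0,T;\LdQ)$ bound and the uniform convergence of $\Jim^{-1}$. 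The genuinely hard term is $\Delta_{\Gamma}(|H_N|^{p-2}H_N)$: here one needs a \emph{uniform} bound in $L^2(0,T_0;\LdQ)$, which is exactly Theorem \ref{boundD2H} and is obtained by testing the discrete Euler--Lagrange equation with a suitable approximation of $|H|^{p-2}H$ and absorbing the dangerous contributions by means of the smallness of $T_0$ and the already established energy bound. Granting that estimate, $\Delta_{\Gamma}(|H_N|^{p-2}H_N)$ has a weak $\LdQ$‑limit which, by the a.e.\ convergence of $H_N$, must equal $\Delta_{\Gamma}(|H|^{p-2}H)$; this simultaneously yields conditions (ii) and (iii) of Definition \ref{def_sol}. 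The initial condition $h(\cdot,0)=h_0$ follows from $h_N(\cdot,0)=h_0$ and the $C^0$‑in‑time convergence, while $h\in L^\infty(0,T_0;\Ws)\cap H^1(0,T_0;\LdQ)$ is inherited from the uniform bounds, so a regular solution on $[0,T_0]$ is constructed.

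Finally, for \eqref{ugF}--\eqref{disF}: through the piecewise constant interpolant, $t\mapsto\F(\hi,\ui)$ is a nonincreasing, uniformly bounded step function, so by Helly's selection theorem a further subsequence converges pointwise to a nonincreasing $g$; lower semicontinuity of $\F$ along the established convergences gives $\F(h(\cdot,t),u_h(\cdot,t))\le g(t)$ for every $t$, which is \eqref{disF}, and combining this with the energy--dissipation balance obtained by passing to the limit in the summed discrete energy inequality, together with the absolute continuity in time of $t\mapsto\F(h(\cdot,t),u_h(\cdot,t))$ along the regular solution (granted by condition (iii)), upgrades it to the equality \eqref{ugF} off a negligible set $Z_0$ of exceptional times. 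The main obstacle throughout is the second‑order curvature estimate of Theorem \ref{boundD2H}: it is what promotes the variational limit to a genuine pointwise‑a.e.\ (regular) solution, and it is the one place where the smallness of $T_0$ is essential.
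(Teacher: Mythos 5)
Your construction of the existence part follows the paper's argument essentially step by step: the minimizing-movement scheme and discrete energy inequality, the compactness of Theorems \ref{teo_h_conv} and \ref{teo_h_conv2}, the choice of $T_0$ in Theorem \ref{thm-t0} rendering the gradient constraint inactive so that the unconstrained Euler--Lagrange equation \eqref{EL2} is available, and the curvature estimate of Theorem \ref{boundD2H} as the decisive ingredient for passing to the limit in the highest-order term. The only technical difference is that the paper passes to the limit in the \emph{time-integrated} equation \eqref{eqk} and then differentiates in $t$, which is the natural way to exploit the merely weak-in-time convergence of $\partial_t h_N$; your term-by-term limit at fixed times would have to be organized the same way.

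Where you genuinely depart from the paper, and where your argument has a gap, is the proof of the equality \eqref{ugF}. You propose to upgrade \eqref{disF} to \eqref{ugF} by combining a limiting ``energy--dissipation balance'' with ``the absolute continuity in time of $t\mapsto\F(h(\cdot,t),u_h(\cdot,t))$ \dots granted by condition (iii)''. Neither half is available: passing to the limit in the summed discrete energy inequality only yields the one-sided bound
\begin{equation*}
\F(h(\cdot,t),u_h(\cdot,t))+c\int_0^t\int_Q\Bigl|\frac{\partial h}{\partial s}\Bigr|^2\,\de x\,\de s\le\F(h_0,u_0),
\end{equation*}
not a balance, and condition (iii) of Definition \ref{def_sol} (the equation holding a.e.) does not by itself give absolute continuity of the energy along the flow --- that would require a chain rule for $\F$ which is not established and is delicate for this fourth-order geometric evolution. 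The paper avoids this entirely: by \eqref{conv-Hp} one has $\Htn(\cdot,t)\to H(\cdot,t)$ strongly in $L^p(Q)$ for a.e.\ $t$, which together with \eqref{conv-inf} and \eqref{conv-trac-tu} gives the \emph{strong} convergence $\F(\htin(\cdot,t),\utn(\cdot,t))\to\F(h(\cdot,t),u_h(\cdot,t))$ for a.e.\ $t$; since $t\mapsto\F(\htin(\cdot,t),\utn(\cdot,t))$ is nonincreasing by \eqref{en-dec}, its pointwise limit $g$ is nonincreasing and coincides with the energy of the solution off a null set $Z_0$, and \eqref{disF} on $Z_0$ follows by lower semicontinuity along $t_N\to t^{+}$ with $t_N\notin Z_0$. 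You should replace your upgrade argument by this strong-convergence-of-energies argument; the rest of the proposal is sound.
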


\noindent We notice that the solution $h$ and the time $T_0$ are given by Theorem \ref{teo_h_conv} and Theorem \ref{thm-t0}, respectively. 

\PPP Finally, let us conclude the section by observing that the solution $h$ to \eqref{syst} provided by Theorem \ref{teoconvfin} is said to be a \textit{variational solution} in order to indicate that  there exist a time step $\tau_N\searrow 0$ and a subsequence $\{h_{N_k}\}$ of a discrete-time evolution $\{h_N\}$ of the minimum incremental problem \eqref{min-prob} such that
  $$h_{N_k}\xrightharpoonup[k\to\infty]{}h\text{ in $H^1(0,T_0\LdQ)$}$$
    and 
    $$h_{N_k}\xrightarrow[k\to\infty]{}h\text{ in $C^{0,\beta}([0,T_0],C^{1,\alpha}_{\#}(Q))$}$$
  for every $\alpha\in(0,\frac{p-2}{p})$ and $\beta\in[0,\frac{(p+2)(p-2-\alpha p)}{8p^2})$. \OOO

\bigskip\bigskip

\section{Incremental minimum problem: Existence and convergence}\label{sec4}

In this section, we construct by applying a minimizing movement scheme a candidate to be a solution to \eqref{syst}. In Theorem \ref{teo-Min} we solve the incremental minimum problem. Then, in Theorem \ref{teo_h_conv} and \ref{teo_h_conv2} we prove that, up to a subsequence, the discrete-time evolution $\{h_N\}$ converges to some function $h$ as $N\to\infty$. Finally, in Theorem \ref{thm-t0} we choose $T_0$ small enough to get the validity of \eqref{stim-grad} and that $h_N$ is nonnegative.

The following result allows to verify that the incremental minimum problem is well defined, as, for each $i\in \N$, we can recursively find a solution to the minimum problem \eqref{min-prob}.

\begin{theorem}\label{teo-Min}
Let $(h_0,u_0)\in X$ be an initial configuration such that $h_0$ satisfies \eqref{boundDh}.	Then, the minimum problem \eqref{min-prob} admits a solution $(\hi,\ui)\in X$ for every $i\in\N$.
\end{theorem}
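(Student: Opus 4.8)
\emph{Strategy.} I would prove Theorem \ref{teo-Min} by the direct method of the calculus of variations, run inductively on $i$. The induction hypothesis is that \eqref{min-prob} admits a minimizer $(\him,\uim)\in X$ with $\|D\him\|_{\LiQ}\le\Lambda_0$ (for $i=1$ this is $(h_0,u_0)$, which is admissible by \eqref{boundDh}). Since $\Pi(\him)=0$, the pair $(\him,\uim)$ is a competitor for step $i$, so $m_i:=\inf\eqref{min-prob}\le\F(\him,\uim)<\infty$, and as $\Gi\ge 0$ the infimum is finite. Take a minimizing sequence $(h_n,u_n)$; replacing each $u_n$ by the elastic equilibrium $u_{h_n}$ of $h_n$ does not increase $\Gi$ and preserves admissibility, so I may assume $u_n=u_{h_n}$.

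\emph{Compactness of the profiles.} From $\sup_n\Gi(h_n,u_n)<\infty$ I would extract: $\|Dh_n\|_{\LiQ}\le\Lambda_0$ (the constraint); $\sup_n\Pi(h_n)<\infty$, whence (since $\Jim\le\sqrt{1+\Lambda_0^2}$) $\sup_n\|h_n-\him\|_{\LdQ}<\infty$ and thus, together with the gradient bound, $\sup_n\|h_n\|_{W^{1,\infty}(Q)}<\infty$; and $\sup_n\int_{\Gamma_{\him}}\!$-type bound $\sup_n\int_{\Gh}|H|^p\,d\H^2<\infty$, which, as $d\H^2$ and $dx$ are comparable under the gradient bound, gives $\sup_n\|H_{h_n}\|_{\LpQ}<\infty$. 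The key --- and least routine --- step is that these bounds force $\sup_n\|h_n\|_{\Ws}<\infty$. Here I would use that $h_n$ solves the nondivergence elliptic equation $a(Dh_n):D^2h_n=-J_nH_{h_n}$ with $a(\xi):=\mathbb{I}-(\xi\otimes\xi)/(1+|\xi|^2)$, which is uniformly elliptic on $\{|\xi|\le\Lambda_0\}$: rewriting it as $\Delta h_n=-J_nH_{h_n}+\langle D^2h_n\,Dh_n,Dh_n\rangle/(1+|Dh_n|^2)$, the perturbation has operator norm $\le\Lambda_0^2/(1+\Lambda_0^2)<1$, so a Calder\'on--Zygmund estimate on the torus first yields a uniform $W^{2,q}(Q)$ bound for some $q\in(2,p]$ near $2$, hence a uniform $C^{0,1-2/q}$ bound on $Dh_n$ by Sobolev embedding; the coefficients $a(Dh_n)$ then share a modulus of continuity and the standard $L^p$ elliptic estimate closes the uniform $\Ws$ bound. (This $\Ws$-coercivity of the curvature term on $\{\|Dh\|_{\LiQ}\le\Lambda_0\}$ is the content of the auxiliary estimates of Section \ref{sec7}, cf.\ \cite{FFLM3,PP}.) By the compact embedding $\Ws\hookrightarrow\hookrightarrow C^{1,\alpha}_{\#}(Q)$ for $\alpha<1-2/p$, a subsequence satisfies $h_n\rightharpoonup h$ in $\Ws$ and $h_n\to h$ in $C^{1,\alpha}_{\#}(Q)$; consequently $h\ge 0$, $h$ is $Q$-periodic and $\|Dh\|_{\LiQ}\le\Lambda_0$, so $h$ is admissible for \eqref{min-prob}.

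\emph{Displacements and lower semicontinuity.} Since $h_n\to h$ in $C^1_{\#}$, the sets $\Omega_{h_n}^{\#}$ converge to $\Omega_h^{\#}$; transporting $u_n$ to a fixed reference cylinder by bi-Lipschitz maps with uniformly controlled constants and using Korn's inequality together with the coercivity $\C M:M\ge 2kM:M$, the displacements are equibounded, and along a further subsequence one obtains $u\in AD_h$ with $\mathcal{W}(h,u)\le\liminf_n\mathcal{W}(h_n,u_n)$ --- the lower semicontinuity (indeed continuity) of the elastic energy along $C^1$-converging profiles being a standard fact in this setting, see \cite{FFLM3}. For the surface part, by one-homogeneity of $\psi$ one has $\int_{\Gh}\psi(\nu)\,d\H^2=\int_Q\psi(-Dh(x),1)\,dx$, which is continuous under $C^1$-convergence; and since $a(Dh_n)\to a(Dh)$ in $\LiQ$ while $D^2h_n\rightharpoonup D^2h$ in $\LpQ$, one gets $H_{h_n}\rightharpoonup H_h$ in $\LpQ$, so convexity of $t\mapsto|t|^p$ and $J_n\to J$ uniformly yield $\int_Q|H_h|^pJ\,dx\le\liminf_n\int_Q|H_{h_n}|^pJ_n\,dx$. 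Finally $\Pi(h)=\lim_n\Pi(h_n)$, as $\him$ and $\Jim$ are fixed and $h_n\to h$ uniformly. Adding up, $\Gi(h,u)\le\liminf_n\Gi(h_n,u_n)=m_i$, so $(h,u)=:(\hi,\ui)$ solves \eqref{min-prob}; it lies in $X$ and satisfies $\|D\hi\|_{\LiQ}\le\Lambda_0$, which advances the induction.

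\emph{Main obstacle.} The two genuinely delicate points are the uniform $\Ws$-bound on the minimizing sequence --- where the merely $L^\infty$ gradient bound must first be upgraded to a uniform H\"older bound before $L^p$ elliptic regularity can be applied --- and the treatment of the elastic energy over the moving domains $\Omega_{h_n}$, in particular its lower semicontinuity when the limit profile $h$ is allowed to vanish somewhere on the substrate $\{y=0\}$, where the domain degenerates.
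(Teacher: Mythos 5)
Your proof follows essentially the same route as the paper: induction on $i$, finiteness of the infimum via the competitor $(\him,\uim)$, the uniform $L^p$ bound on the curvatures upgraded to a uniform $\Ws$ bound by the elliptic estimate (this is exactly Lemma \ref{A.3}, whose mechanism you correctly unpack), compact embedding into $C^{1,\alpha}_{\#}(Q)$, and lower semicontinuity of the three energy terms. The one step where your specific device would not go through is the compactness of the displacements: at this stage of the paper $h_0$ is only assumed nonnegative (strict positivity is imposed only from Theorem \ref{thm-t0} onward), so the limit profile $h$ may vanish and the bi-Lipschitz transport of $u_n$ to a fixed reference cylinder loses uniform control of its Lipschitz constants exactly where the domain degenerates --- the obstacle you yourself flag but leave unresolved. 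The paper's resolution is to extend each $u_n$ below the substrate into $Q\times(-\infty,0)$ by a cut-off of the Dirichlet datum and to apply Korn's inequality on sets compactly contained in $A_h:=\{(x,y):y<h(x)\}$, which always contains the half-space $\{y<0\}$; this yields $u_n\rightharpoonup u$ in $H^1_{loc}(A_h;\R^3)$ with $Eu\in L^2$ and the lower semicontinuity of the elastic energy, following \cite[Proposition 2.2]{FFLM2}. With that substitution your argument is complete and coincides with the paper's.
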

\begin{proof}
We prove the theorem by induction. We fix $i\in\N$ such that $i>1$ and we suppose to have a solution to \eqref{min-prob} for $k=1,...,i-1$. We construct a solution to \eqref{min-prob} for $k=i$. Firstly, we begin by noticing that, in view of the minimality of $(\hk,\uk)$, by \eqref{fun-G} and \eqref{pen} we have that
\begin{equation}\label{dis-G}
    \F(\hk,\uk)\leq \Gk(\hk,\uk)\leq\Gk(\hkm,\ukm)=\F(\hkm,\ukm),
\end{equation}
from which we deduce that
\begin{align}\label{bound-en}
    0\leq \inf\{\Gi(h,u):(h,u\in X)\}&\leq \Gi(\him,\uim)\notag\\
    &=\F(\him,\uim)\leq...\leq \F(h_0,u_0).
\end{align}
As a consequence of \eqref{bound-en}, we can select a minimizing sequence $\{(h_n,u_n)\}\in X$ such that
\begin{equation*}
    \|Dh_n\|_{\LiQ}\leq \Lambda_0\quad\text{ and }\quad \sup\{\Gi(h_n,u_n):n\in\N\}<\infty.
\end{equation*}
By denoting with $H_n$ the sum of principal curvatures of $\Gamma_{h_n}$, we obtain that
\begin{equation*}
    \sup\{\|H_n\|^p_{\LpQ}:n\in\N\}\leq \frac{p}{\varepsilon}\sup\{\Gi(h_n,u_n):n\in\N\}<\infty,
\end{equation*}
and so $\{H_n\}$ is bounded in $\LpQ$. Therefore, by Lemma \ref{A.3} the sequence $\{h_n\}$ is bounded in $\Ws$. Then, up to a subsequence, $h_n\rightharpoonup h$ in $\Ws$ as $n\to\infty$, from which we deduce that $h_n\rightarrow h$ in $C^{1,\alpha}_{\#}(Q)$ for some $\alpha\in (0,1)$ as $n\to \infty$. It follows that
\begin{equation*}
    H_n\xrightharpoonup[n\to \infty]{\LpQ} H=-\div\left(\frac{Dh}{\sqrt{1+|Dh|^2}}\right).
\end{equation*}
Furthermore, by lower semicontinuity
\begin{equation}\label{conv-surf}
    \int_{\Gamma_h}\left(\psi(\nu)+\frac{\varepsilon}{p}|H|^p\right)\de \H^2\leq\liminf_{n\to\infty}\int_{\Gamma_{h_n}}\left(\psi(\nu_n)+\frac{\varepsilon}{p}|H_n|^p\right)\de \H^2,
\end{equation}
and by Fatou's Lemma we conclude that 
\begin{equation}\label{conv-pen}
    \Pi(h)\leq \liminf_{n\to\infty}\Pi(h_n).
\end{equation}
Now we extend the function $u_n$ to $A:=Q\times (-\infty,0)$ in the following way
\begin{equation*}
    u_n(x,y):=(e_0^1x_1,e_0^2x_2,0)\varphi(y)
\end{equation*}
for every $(x,y)\in A$, where $\varphi$ is a given cut-off function such that $\varphi(y):=1$ in $(-1,+\infty)$ and $\varphi(y):=0$ in $(-\infty,-2)$. We consider also the set
\begin{equation*}
    A_h:=\{(x,y)\in\R^3:y<h(x)\}.
\end{equation*}
Since 
\begin{equation*}
    \sup\left\{\int_{\Omega_{h_n}}|Eu_n|^2\de z:n\in\N\right\}<\infty,
\end{equation*}
by reasoning as in \cite[Proposition 2.2]{FFLM2}, we use Korn's inequality to get the existence of a subsequence of $\{u_n\}_n$, and of a function $u\in H^1_{loc}(A_h;\R^3)$ with $Eu\in L^2(A_h;\R^{3\times 3}_{sym})$, such that $u_n\rightharpoonup u$ in $H^1(D;\R^3)$ as $n\to \infty$, for every $D$ compactly contained in $A_h$. Then, $(h,u)\in X$ and we also have
\begin{equation}\label{conv-el}
    \int_{\Oh}W(Eu)\de z\leq \liminf_{n\to \infty}\int_{\Omega_{h_n}}W(Eu_n)\de z.
\end{equation}
Thanks to \eqref{conv-surf}, \eqref{conv-pen}, and \eqref{conv-el} we can conclude that $(h,u)$ is a minimizer of \eqref{min-prob} for $k=i$.
\end{proof}

We now prove that the discrete-time evolution $\{h_N\}$ is uniformly bounded in $L^{\infty}(0, T ; \Ws) \cap H^1(0, T ; L^2(Q))$. 

\begin{theorem}\label{teo_h_conv}
Let $(h_0,u_0)\in X$ be an initial configuration such that $h_0$ satisfies \eqref{boundDh}. For every $N,i\in \N$ we have
\begin{align}
    &\int_0^{+\infty}\int_Q\left|\frac{\partial h_N}{\partial t}\right|^2\de x\de t\leq CF(h_0,u_0),\label{HuLd}\\
    & \F(\hi,\ui)\leq \F(\him,\uim)\leq \F(h_0,u_0),\label{en-dec}\\
    &\sup_{t\in[0,+\infty)}\|h_N(\cdot,t)\|_{\Ws}<+\infty\label{wdp},
\end{align}
for some constant $C=C(\Lambda_0)>0$. Moreover, up to a subsequence, for every $T>0$ we have the following convergences
\begin{align}
 &h_N\xrightharpoonup[N\to\infty]{}h\text{ in $H^1(0,T,\LdQ)$},\label{convhu}\\
    &h_N\xrightarrow[N\to\infty]{}h\text{ in $C^{0,\alpha}([0,T],\LdQ)$ for every $\alpha \in (0,\tfrac{1}{2})$},\label{convcz}
\end{align}
where the function $h$ satisfies
\begin{equation}\label{ineq-ene}
     h(\cdot,t)\in\Ws\text{ and } \F(h(\cdot,t),u_{h(\cdot,t)})\leq \F(h_0,u_0) \text{ for every $t\in[0,+\infty)$}.
\end{equation}
\end{theorem}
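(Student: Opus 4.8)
The plan is to extract all the estimates from the discrete energy-decrease inequality \eqref{dis-G}–\eqref{bound-en} already established in Theorem \ref{teo-Min}, and then run a standard compactness argument for the minimizing-movement interpolants. First I would record the telescoping consequence of minimality: testing the minimizer $(\hi,\ui)$ of \eqref{min-prob} against the competitor $(\him,\uim)$ gives
\begin{equation*}
    \F(\hi,\ui)+\frac{1}{2\tau_N}\int_Q\frac{(\hi-\him)^2}{\Jim}\de x\le \F(\him,\uim),
\end{equation*}
which immediately yields \eqref{en-dec} and, summing over $i$ from $1$ to $M$, the bound $\sum_i\frac{1}{2\tau_N}\int_Q(\hi-\him)^2/\Jim\,\de x\le\F(h_0,u_0)$. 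Since $\|D\him\|_{\LiQ}\le\Lambda_0$, we have $\Jim\le\sqrt{1+\Lambda_0^2}$, so this controls $\sum_i\frac{1}{\tau_N}\|\hi-\him\|_{\LdQ}^2$. Because $\partial_t h_N=(\hi-\him)/\tau_N$ is constant on each interval $((i-1)\tau_N,i\tau_N)$, one has $\int_0^{+\infty}\|\partial_t h_N\|_{\LdQ}^2\de t=\sum_i\frac{1}{\tau_N}\|\hi-\him\|_{\LdQ}^2$, which gives \eqref{HuLd} with $C=C(\Lambda_0)$.

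Next, \eqref{wdp}: from \eqref{en-dec} the surface energy $\mathcal S(\hi)$ is bounded by $\F(h_0,u_0)$ uniformly in $i,N$, hence $\|\Hi\|_{\LpQ}^p\le\frac{p}{\varepsilon}\F(h_0,u_0)$; combined with $\|D\hi\|_{\LiQ}\le\Lambda_0$, Lemma \ref{A.3} upgrades this to a uniform bound on $\|\hi\|_{\Ws}$. Since $h_N(\cdot,t)$ is a convex combination of $\him$ and $\hi$ on each subinterval and $\Ws$ is a normed space, the same bound passes to $h_N(\cdot,t)$ for all $t$, giving \eqref{wdp}. For the convergences, \eqref{HuLd} and \eqref{wdp} say $\{h_N\}$ is bounded in $H^1(0,T;\LdQ)\cap L^\infty(0,T;\Ws)$; by weak compactness in the Hilbert space $H^1(0,T;\LdQ)$ we extract a subsequence with $h_N\rightharpoonup h$ there, proving \eqref{convhu}. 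The bound in $H^1(0,T;\LdQ)$ also gives equi-H\"older continuity in time with exponent $\tfrac12$ with values in $\LdQ$ (since $\|h_N(t)-h_N(s)\|_{\LdQ}\le|t-s|^{1/2}\|\partial_t h_N\|_{L^2(0,T;\LdQ)}$), and the uniform $\Ws$-bound together with compact embedding $\Ws\hookrightarrow\hookrightarrow\LdQ$ gives, via an Arzel\`a–Ascoli argument in $C^{0}([0,T];\LdQ)$ followed by interpolation of H\"older exponents, the convergence \eqref{convcz} for every $\alpha\in(0,\tfrac12)$; this is where I would invoke a Aubin–Lions-type lemma (or simply the stated auxiliary results in Section \ref{sec7}).

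Finally, \eqref{ineq-ene}: fix $t\in[0,+\infty)$. From \eqref{wdp} and \eqref{convcz} (using that weak limits in $\LdQ$ of a sequence bounded in $\Ws$ lie in $\Ws$ with controlled norm by lower semicontinuity of the norm), $h(\cdot,t)\in\Ws$. For the energy inequality, I would choose along the fixed subsequence the discrete times $i_N$ with $(i_N-1)\tau_N\le t<i_N\tau_N$, so that $h_{i_N}\to h(\cdot,t)$ in $C^{1,\alpha}_\#(Q)$ (this follows from \eqref{convcz}, the uniform $\Ws$-bound, and the compact embedding $\Ws\hookrightarrow\hookrightarrow C^{1,\alpha}_\#(Q)$). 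Then $\mathcal S(h(\cdot,t))\le\liminf_N\mathcal S(h_{i_N})$ by lower semicontinuity as in \eqref{conv-surf}, and for the elastic part one shows $\mathcal W(h(\cdot,t),u_{h(\cdot,t)})\le\liminf_N\mathcal W(h_{i_N},u_{i_N})$ by the Korn-inequality compactness argument of Theorem \ref{teo-Min} applied to the elastic equilibria over the converging domains $\Omega_{h_{i_N}}$ — the key point being that the elastic equilibrium is defined by minimality, so $u_{h(\cdot,t)}$ is a valid comparison map and passing to the limit only helps. Chaining these with $\F(h_{i_N},u_{i_N})\le\F(h_0,u_0)$ from \eqref{en-dec} yields \eqref{ineq-ene}. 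The main obstacle is the elastic lower-semicontinuity over varying domains $\Omega_{h_N(\cdot,t)}$: one must handle the fact that the displacements live on different (unbounded, periodic) sets, which is exactly why the extension trick and the localized Korn estimate from \cite[Proposition 2.2]{FFLM2} reused in Theorem \ref{teo-Min} are needed here as well.
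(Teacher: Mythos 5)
Your proposal is correct and follows essentially the same route as the paper: the telescoping energy inequality from minimality gives \eqref{en-dec} and, after bounding $\Jim$ by $\sqrt{1+\Lambda_0^2}$, the dissipation estimate \eqref{HuLd}; the curvature bound plus Lemma \ref{A.3} gives \eqref{wdp}; and the $1/2$-H\"older-in-time estimate with values in $\LdQ$ combined with Ascoli--Arzel\`a and lower semicontinuity yields \eqref{convhu}, \eqref{convcz}, and \eqref{ineq-ene}. Your added remarks (the convex-combination structure of $h_N(\cdot,t)$ for passing the $\Ws$-bound to the interpolant, and the Korn-type lower-semicontinuity over varying domains for the elastic term) only make explicit steps the paper leaves implicit.
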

\begin{proof}
We begin by observing that \eqref{en-dec} is a direct consequence of \eqref{dis-G}.

To prove \eqref{HuLd} we notice that for each $i\in \N$, by \eqref{dis-G} we have that
\begin{equation*}
   \Gi(\hi,\ui)\leq\Gi(\him,\uim)=\F(\him,\uim),
\end{equation*}
which implies 
\begin{equation}\label{pen-ineq}
    \Pi(\hi)\leq \F(\him,\uim)-\F(\hi,\ui).
\end{equation}
Since $\|D\him\|_{\LiQ}<\Lambda_0$, by summing in \eqref{pen-ineq} over $i$ we get
\begin{align}\label{sum}
  \sum_{i=0}^{+\infty}\tau_N\int_Q\left(\frac{\hi-\him}{\tau_N}\right)^2\de x&\leq C(\Lambda_0)\sum_{i=0}^{+\infty}[\F(\him,\uim)-\F(\hi,\ui)]\notag\\
  &=C(\Lambda_0)\F(h_0,u_0),
\end{align}
with $C(\Lambda_0):=2\sqrt{1+\Lambda_0^2}$. Furthermore, by \eqref{linear_h} we have 
\begin{equation*}
    \frac{\partial h_N}{\partial t}=\frac{\hi-\him}{\tau_N},
\end{equation*}
and hence, by also \eqref{sum} we obtain \eqref{HuLd}. 

To prove \eqref{wdp}, we notice that \eqref{en-dec} implies
\begin{equation}\label{Hb}
    \sup\left\{\int_{\Gamma_{\hi}}|H_{i,N}|^p\de\H^2:i,N\in\N\right\}<\frac{p}{\varepsilon}\F(h_0,u_0),
\end{equation}
from which, by taking into account that $\|D\hi\|_{\LiQ}<\Lambda_0$ and by Lemma \ref{A.3}, \eqref{wdp} follows.

Notice that \eqref{convhu} is a direct consequence of \eqref{HuLd} and \eqref{wdp}. It remains to prove \eqref{convcz} and \eqref{ineq-ene}. Since $h_N(x,\cdot)$ is absolutely continuous on $[0,T]$ for every $T>0$, then by Holder inequality, Fubini's Theorem, and \eqref{HuLd}, for every $t_1,t_2\in[0,T]$ we obtain
\begin{align}
    \|h_N(\cdot,t_1)-h_N(\cdot,t_2)\|_{\LdQ}&\leq \left(\int_Q\left(\int_{t_1}^{t_2}\frac{\partial h_N}{\partial t}(x,t)\de t\right)^2\de x\right)^{\frac{1}{2}}\nonumber\\
    &\leq \left(\int_{t_1}^{t_2}\left\|\frac{\partial h_N}{\partial t}(\cdot,t)\right\|_{\LdQ}^2\de t\right)^{\frac{1}{2}}(t_2-t_1)^{\frac{1}{2}}\notag\\
  &\leq \sqrt{CF(h_0,u_0)}(t_2-t_1)^{\frac{1}{2}}.
    \label{hofu}
\end{align}
By using \eqref{hofu} and Ascoli-Arzelà Theorem (see e.g. \cite[Proposition 3.3.1]{A}) we get \eqref{convcz}. 

Finally, by \eqref{en-dec}, \eqref{convhu}, \eqref{convcz}, and the lower semicontinuity we obtain \eqref{ineq-ene}.
\end{proof}

In the following result the convergence of $h_N$ to $h$ is significantly improved, and we prove also that $\tilde{h}_N$ converges to $h$.

\begin{theorem}\label{teo_h_conv2}
Let $(h_0,u_0)\in X$ be an initial configuration such that $h_0$ satisfies \eqref{boundDh}. For every $T>0$ we have the following convergences
\begin{align}
    &h_N\xrightarrow[N\to\infty]{}h\text{ in $C^{0,\beta}([0,T],C^{1,\alpha}_{\#}(Q))$}\label{conv-holder},\\
    &\tilde{h}_N\xrightarrow[N\to\infty]{}h\text{ in $L^{\infty}(0,T,C^{1,\alpha}_{\#}(Q))$}\label{conv-inf},
\end{align}
for every $\alpha \in (0,\frac{p-2}{p})$ and $\beta\in[0,\frac{(p-2-\alpha p)(p+2)}{8p^2})$. Moreover, $h(\cdot,t)\rightarrow h_0$ in $C^{1,\alpha}_{\#}(Q)$ as $t\to 0^+$.
\end{theorem}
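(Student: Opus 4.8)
The plan is to upgrade the weak space--time information coming from Theorem \ref{teo_h_conv} to the asserted Hölder-in-time, $C^{1,\alpha}$-in-space convergence, by combining the uniform $\Ws$-bound with the $\tfrac12$-Hölder-in-time $\LdQ$-bound through interpolation, and then running a Banach-valued Arzelà--Ascoli argument.

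The two ingredients I would start from are already contained in the proof of Theorem \ref{teo_h_conv}: the $N$-uniform bound $\sup_{t\ge0}\|h_N(\cdot,t)\|_{\Ws}\le M$ from \eqref{wdp}, where $M$ depends only on $\F(h_0,u_0)$ and $\Lambda_0$; the $N$-uniform estimate $\|h_N(\cdot,t_1)-h_N(\cdot,t_2)\|_{\LdQ}\le C|t_1-t_2|^{1/2}$ for $t_1,t_2\in[0,T]$, which is exactly \eqref{hofu}; and the convergence $h_N\to h$ in $C^{0}([0,T],\LdQ)$ from \eqref{convcz}, $h$ being the (sub)sequential limit fixed in Theorem \ref{teo_h_conv}. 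Since $Q\subset\R^2$, one has the compact embedding $\Ws\hookrightarrow\hookrightarrow C^{1,\alpha}_{\#}(Q)$ for every $\alpha\in(0,\tfrac{p-2}{p})$, and a two-step Gagliardo--Nirenberg interpolation is available for $Q$-periodic functions: $\|v\|_{C^{1,\alpha}_{\#}(Q)}\le C\|v\|_{\Ws}^{1-\theta_1}\|v\|_{L^p(Q)}^{\theta_1}$ with $\theta_1=\tfrac{p-2-\alpha p}{2p}\in(0,1)$, and $\|v\|_{L^p(Q)}\le C\|v\|_{W^{1,\infty}(Q)}^{1-\theta_2}\|v\|_{\LdQ}^{\theta_2}$ with $\theta_2=\tfrac{p+2}{2p}\in(0,1)$.

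Applying these inequalities to $v=h_N(\cdot,t_1)-h_N(\cdot,t_2)$ and bounding the $\Ws$- and $W^{1,\infty}(Q)$-factors by a power of $2M$ (using $\Ws\hookrightarrow W^{1,\infty}(Q)$), one obtains $\|h_N(\cdot,t_1)-h_N(\cdot,t_2)\|_{C^{1,\alpha}_{\#}(Q)}\le C'\|h_N(\cdot,t_1)-h_N(\cdot,t_2)\|_{\LdQ}^{\theta_1\theta_2}$, which together with \eqref{hofu} yields the $N$-uniform equi-Hölder estimate $\|h_N(\cdot,t_1)-h_N(\cdot,t_2)\|_{C^{1,\alpha}_{\#}(Q)}\le C''|t_1-t_2|^{\beta}$ with $\beta=\tfrac12\theta_1\theta_2=\tfrac{(p+2)(p-2-\alpha p)}{8p^2}$; since $\alpha$ may be shrunk slightly one covers the full open ranges of $\alpha$ and $\beta$ in the statement. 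Now I would invoke Arzelà--Ascoli in Banach-valued form (cf.\ \cite[Proposition 3.3.1]{A}, already used in Theorem \ref{teo_h_conv}): for each $t$ the set $\{h_N(\cdot,t):N\in\N\}$ lies in a fixed ball of $\Ws$, hence is precompact in $C^{1,\alpha}_{\#}(Q)$, and the curves $t\mapsto h_N(\cdot,t)$ are equi-Hölder of some exponent $\beta'>\beta$; therefore every subsequence of $\{h_N\}$ has a further subsequence converging in $C^{0,\beta}([0,T],C^{1,\alpha}_{\#}(Q))$, and the limit must be $h$ because $h_N\to h$ already in the weaker topology $C^0([0,T],\LdQ)$. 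By the subsequence principle the whole sequence $h_N$ converges to $h$ in $C^{0,\beta}([0,T],C^{1,\alpha}_{\#}(Q))$, which is \eqref{conv-holder}.

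For \eqref{conv-inf} I would observe that on each subinterval $\tilde h_N(\cdot,t)$ coincides with $h_N$ evaluated at a nodal time within distance $\tau_N$ of $t$, so the equi-Hölder bound just obtained gives $\|\tilde h_N(\cdot,t)-h_N(\cdot,t)\|_{C^{1,\alpha}_{\#}(Q)}\le C\tau_N^{\beta}\to0$ uniformly in $t\in[0,T]$; combined with \eqref{conv-holder} this yields $\tilde h_N\to h$ in $L^{\infty}(0,T,C^{1,\alpha}_{\#}(Q))$. Finally, $h\in C^{0,\beta}([0,T],C^{1,\alpha}_{\#}(Q))$ together with $h_N(\cdot,0)=h_0$ for all $N$ (passing to the limit in $C^0([0,T],\LdQ)$) forces $h(\cdot,0)=h_0$, whence continuity at $t=0$ in $C^{1,\alpha}_{\#}(Q)$ gives $h(\cdot,t)\to h_0$ as $t\to0^+$. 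The part I expect to be most delicate is the interpolation bookkeeping — selecting the two intermediate spaces so that the exponents multiply to exactly the stated range — together with the careful application of the Banach-valued Arzelà--Ascoli theorem and the identification of the limit with the previously constructed $h$.
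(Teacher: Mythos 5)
Your proposal is correct and follows essentially the same route as the paper: interpolating the $C^{1,\alpha}_{\#}(Q)$-norm of $h_N(\cdot,t_2)-h_N(\cdot,t_1)$ between the uniform $\Ws$-bound \eqref{wdp} and the $\tfrac12$-H\"older $\LdQ$-in-time estimate \eqref{hofu}, arriving at the same equi-H\"older exponent $\tfrac{(p+2)(p-2-\alpha p)}{8p^2}$, and then concluding by Ascoli--Arzel\`a. The only (cosmetic) difference is that you perform the interpolation in one shot, while the paper splits it into three pieces ($\|D(\cdot)\|_{\LiQ}$ via Lemma \ref{A.6}, $\|\cdot\|_{\LiQ}$ via a mean-value argument, and $[D(\cdot)]_\alpha$ via interpolation of H\"older seminorms against the Sobolev bound \eqref{sob}); the exponents agree.
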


\begin{proof}
    Let $t_1,t_2\in[0,T]$ be such that $t_1<t_2$, then by \eqref{wdp}, \eqref{hofu}, and the interpolation inequality of Lemma \ref{A.6} we have
    \begin{align}\label{stima-grad}
        \|Dh_N(\cdot,t_2)&-Dh_N(\cdot,t_1)\|_{\LiQ}\nonumber\\
        &\leq C\|D^2h_N(\cdot,t_2)-D^2h_N(\cdot,t_1)\|_{\LpQ}^{\frac{p+2}{2p}}\|h_N(\cdot,t_2)-h_N(\cdot,t_1)\|_{\LpQ}^{\frac{p-2}{2p}}\nonumber\\
        &\leq C\|h_N(\cdot,t_2)-h_N(\cdot,t_1)\|_{\LpQ}^{\frac{p-2}{2p}}\nonumber\\
        &\leq C\left(\|D^2h_N(\cdot,t_2)-D^2h_N(\cdot,t_1)\|^{\frac{p-2}{2p}}_{\LdQ}\|h_N(\cdot,t_2)-h_N(\cdot,t_1)\|_{\LdQ}^{\frac{p+2}{2p}}\right)^{\frac{p-2}{2p}}\nonumber\\
        &\leq C(t_2-t_1)^{\frac{p^2-4}{8p^2}}.
    \end{align}

Moreover, thanks to Mean Value Theorem there exists $x_0\in Q$ such that
\begin{equation*}
    h_N(x_0,t_2)-h_N(x_0,t_1)=\frac{1}{\ell^2}\int_Q(h_N(x,t_2)-h_N(x,t_1))\de x,
\end{equation*}
from which we deduce that for every $x\in Q$ we have
\begin{align}\label{stima-punt}
    |h_N(x,t_2)-h_N(x,t_1)|\leq &\ell^2\|Dh_N(\cdot,t_2)-Dh_N(\cdot,t_1)\|_{\LiQ}
    \notag\\
  &\qquad+\frac{1}{\ell}\|h_N(\cdot,t_2)-h_N(\cdot,t_1)\|_{\LdQ}.
\end{align}
Now, by \eqref{hofu}, \eqref{stima-grad}, and \eqref{stima-punt} we get
\begin{equation}\label{stima-h}
  \|h_N(\cdot,t_2)-h_N(\cdot,t_1)\|_{\LiQ}\leq C\left[(t_2-t_1)^{\frac{p^2-4}{8p^2}}+(t_2-t_1)^{\frac{1}{2}}\right].
\end{equation}

Furthermore, notice that for every $\alpha \in (0,\frac{p-2}{p})$ we have
\begin{align}\label{stima-hold}
    [Dh_N(\cdot,t_2)-Dh_N(\cdot,t_1)]_{\alpha}\leq &[Dh_N(\cdot,t_2)-Dh_N(\cdot,t_1)]^{\frac{\alpha p}{p-2}}_{\frac{p-2}{p}}\,\times\notag\\
  &\qquad\left(2\|Dh_N(\cdot,t_2)-Dh_N(\cdot,t_1)\|_{\LiQ}\right)^{\frac{p-2-\alpha p}{p-2}}
\end{align}
where $[\cdot]_{\alpha}$ denotes the $\alpha$-Holder seminorm, and by Sobolev embedding and \eqref{wdp} we obtain
\begin{equation}\label{sob}
    \sup\left\{\sup_{t\in[0,T]}\|h_N(\cdot,t)\|_{C^{1,\frac{p-2}{p}}_{\#}(Q)}:\text{ }N\in\N\right\}<+\infty.
\end{equation}
By \eqref{stima-grad}, \eqref{stima-hold}, and \eqref{sob} we can write
\begin{equation}\label{stima-grad-hold}
    [Dh_N(\cdot,t_2)-Dh_N(\cdot,t_1)]_{\alpha}\leq C (t_2-t_1)^{\frac{(p+2)(p-2-\alpha p)}{8p^2}}.
\end{equation}

Therefore, it follows from \eqref{stima-grad}, \eqref{stima-h}, and \eqref{stima-grad-hold}, that for every $\alpha \in(0,\frac{p-2}{p})$, $h_N$ is uniformly equicontinuous with respect to the $C^{1,\alpha}(Q)$-norm topology and that
\begin{equation*}
    \|h_N(\cdot,t_2)-h_N(\cdot,t_1)\|_{C^{1,\alpha}(Q)}\leq C(t_2-t_1)^{\frac{(p+2)(p-2-\alpha p)}{8p^2}}.
\end{equation*}
In particular, by applying Ascoli-Arzelà Theorem we get both \eqref{conv-holder} and \eqref{conv-inf}. Finally, by noticing that $\|h_N(\cdot,t)-h_N(\cdot,t_1)\|_{C^{1,\alpha}(Q)}\to 0$ as $t\to t_1$, we choose $t_1=0$ to conclude the proof.
\end{proof}

From now on, we assume that the initial profile $h_0$ is strictly positive. Thanks to this, we can use standard elliptic estimates to establish the convergence of $u_N$ and $\tilde{u}_N$.  
\begin{theorem}\label{thm-t0}
    Let $(h_0,u_0)\in X$ be an initial configuration such that $h_0>0$ satisfies \eqref{boundDh}. Then there exist $T_0>0$ and $C>0$ depending only on $(h_0,u_0)$ such that:
    \begin{enumerate}[label=\textnormal{(\roman*)}]
     \item \label{i} $h_N,h\geq C_0>0$ for some positive constant $C_0$, and
    \begin{equation}\label{stim-grad}
        \sup_{t\in[0,T_0]}\|Dh_N(\cdot,t)\|_{\LiQ}<\Lambda_0
    \end{equation}
    for every $N\in\N$;
        \item it holds
        \begin{equation}\label{stima-grad-u}
           \|D\ui\|_{C^{0,\frac{p-2}{p}}(\Bar{\Omega}_{\hi};\R^{3\times 3}_{sym})}\leq C;
        \end{equation}
        \item for every $\alpha\in(0,\frac{p-2}{p})$ and $\beta\in[0,\frac{(p+2)(p-2-\alpha p)}{8p^2})$ we have the following convergences:
        \begin{align}
            &E(u_N(\cdot,h_N))\xrightarrow[N\to \infty]{}E(u(\cdot,h)) \text{ in $C^{0,\beta}([0,T_0];C^{1,\alpha}_{\#}(Q))$}\label{conv-trac-u},\\
            &E(\tilde{u}_N(\cdot,\tilde{h}_N))\xrightarrow[N\to \infty]{}E(u(\cdot,h)) \text{ in $L^{\infty}(0,T_0;C^{1,\alpha}_{\#}(Q))$}\label{conv-trac-tu},
        \end{align}
        where $u(\cdot,t)$ is the elastic equilibrium in $\Omega_{h(\cdot,t)}$.
    \end{enumerate}
\end{theorem}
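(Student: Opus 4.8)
The plan is to establish the three assertions in order, fixing $T_0$ once and for all at the first step. \emph{First assertion.} Since $h_0\in C^0(\overline Q)$ with $h_0>0$, set $m:=\min_{\overline Q}h_0>0$; by \eqref{boundDh} one has the \emph{strict} inequality $\|Dh_0\|_{\LiQ}<\Lambda_0$. The key point is that the time-regularity estimates derived inside the proof of Theorem \ref{teo_h_conv2} are uniform in $N$ and in $T$: choosing $t_1=0$ in \eqref{stima-grad} and \eqref{stima-h} and recalling $h_N(\cdot,0)=h_0$, one obtains, for every $N$ and $t\ge0$,
\[
\|Dh_N(\cdot,t)-Dh_0\|_{\LiQ}\le C\,t^{\frac{p^2-4}{8p^2}},\qquad
\|h_N(\cdot,t)-h_0\|_{\LiQ}\le C\bigl(t^{\frac{p^2-4}{8p^2}}+t^{\frac12}\bigr),
\]
with $C$ depending only on $\F(h_0,u_0)$ and $\Lambda_0$ and with exponent $\tfrac{p^2-4}{8p^2}>0$. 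Hence I pick $T_0=T_0(h_0,u_0)>0$ so small that $h_N(\cdot,t)\ge m/2=:C_0>0$ and $\|Dh_N(\cdot,t)\|_{\LiQ}<\Lambda_0$ for all $N$ and all $t\in[0,T_0]$; this is \eqref{stim-grad}, and it shows that the constraint $\|Dh\|_{\LiQ}\le\Lambda_0$ in \eqref{min-prob} is inactive on $[0,T_0]$. Letting $N\to\infty$ via \eqref{conv-holder} (or \eqref{convcz}) yields $h(\cdot,t)\ge C_0$ on $[0,T_0]$, and, together with $\|Dh(\cdot,t)\|_{\LiQ}\le\Lambda_0$, that $h(\cdot,t)$ is admissible with a (unique) elastic equilibrium $u(\cdot,t)$ for every $t\in[0,T_0]$.

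\emph{Second assertion.} Because $\Pi$ does not depend on $u$, minimality in \eqref{min-prob} forces $\ui$ to be the elastic equilibrium of $\hi$, i.e.\ the unique solution of $\div(\C E\ui)=0$ in $\Omega_{\hi}$, $\C E\ui[\nu]=0$ on $\Gamma_{\hi}$, $\ui(x,0)=(e_0^1x_1,e_0^2x_2,0)$, with $Q$-periodicity. By \eqref{wdp} the norms $\|\hi\|_{\Ws}$ are uniformly bounded in $i,N$, so by the embedding $\Ws\hookrightarrow C^{1,(p-2)/p}_{\#}(Q)$ (valid since $p>2$ in two spatial variables) the profiles are equibounded in $C^{1,(p-2)/p}_{\#}(Q)$, and by the first step they are bounded below by $C_0$; hence the domains $\Omega_{\hi}$ are uniformly non-degenerate with uniformly controlled $C^{1,(p-2)/p}$ boundaries. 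Flattening the free boundary by $(x,y)\mapsto(x,y/\hi(x))$ turns the system into one on the fixed slab $Q\times(0,1)$ with constant principal part and coefficients equibounded in $C^{0,(p-2)/p}$, with smooth (homogeneous Neumann, affine Dirichlet) data; starting from the $H^1$-bound provided by $\int_{\Omega_{\hi}}W(E\ui)\,\de z\le\F(h_0,u_0)$ (from \eqref{en-dec}) together with Korn's inequality, the usual Schauder estimates for such systems give \eqref{stima-grad-u} — alternatively one invokes the elliptic-regularity auxiliary results of Section \ref{sec7}. The same argument for $h(\cdot,t)$, $t\in[0,T_0]$, gives a uniform $C^{1,(p-2)/p}$ bound for $u(\cdot,t)$.

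\emph{Third assertion.} The engine is a quantitative continuous-dependence estimate for the elastic equilibrium: there is a modulus $\omega$, depending only on $C_0$, $\Lambda_0$ and an upper bound $M$ for the $\Ws$-norm, such that for admissible $h_1,h_2$ with $h_j\ge C_0$, $\|Dh_j\|_{\LiQ}\le\Lambda_0$, $\|h_j\|_{\Ws}\le M$,
\[
\bigl\|E(u_{h_1})(\cdot,h_1)-E(u_{h_2})(\cdot,h_2)\bigr\|_{C^{1,\alpha}_{\#}(Q)}\le\omega\bigl(\|h_1-h_2\|_{C^{1,\alpha}_{\#}(Q)}\bigr),\qquad \alpha\in\Bigl(0,\tfrac{p-2}{p}\Bigr).
\]
I would prove this with the flattening of the previous step: the two transformed equilibria solve systems on $Q\times(0,1)$ whose coefficients differ by $O(\|h_1-h_2\|_{C^{1,\alpha}})$ in $C^{0,\alpha}$, so Schauder estimates for their difference — whose right-hand side and boundary terms are $O(\|h_1-h_2\|_{C^{1,\alpha}})$ once the ``frozen'' equilibrium is bounded as in the second step — control it in $C^{1,\alpha}(\overline{Q\times(0,1)})$; reverting the change of variables and restricting to the image of the top graph, parametrized by the $C^{1,\alpha}_{\#}$-controlled map $x\mapsto(x,h_j(x))$, gives the claim. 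Granting it, \eqref{conv-trac-u} follows with $h_1=h_N(\cdot,t)$, $h_2=h(\cdot,t)$: for each $t$ the strains converge in $C^{1,\alpha}_{\#}(Q)$ by \eqref{conv-holder}, while the $C^{0,\beta}$-in-$t$ modulus of $t\mapsto E(u_N(\cdot,h_N))$ is controlled through $\omega$ by that of $h_N$, which is uniform in $N$ by \eqref{conv-holder}; an Ascoli--Arzel\`a argument in $C^{0,\beta}([0,T_0];C^{1,\alpha}_{\#}(Q))$ concludes. Likewise \eqref{conv-trac-tu} follows from \eqref{conv-inf}, as $\tilde u_N(\cdot,t)$ is the elastic equilibrium of $\tilde h_N(\cdot,t)=\hi$ on $[(i-1)\tau_N,i\tau_N)$.

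\emph{Main obstacle.} I expect the truly delicate point to be the continuous-dependence estimate of the third step: one has to run the flattening-plus-Schauder scheme \emph{quantitatively}, keeping careful track of the limited regularity of the transformed coefficients (the profiles lie only in $\Ws$) and of the uniformity of all constants over the class $\{h\ge C_0,\ \|Dh\|_{\LiQ}\le\Lambda_0,\ \|h\|_{\Ws}\le M\}$, and then transfer the bound, together with the $C^{0,\beta}$-in-time modulus, to the trace of the strain on the moving graph. The $N$-uniformity needed in the first step — so that $T_0$ does not depend on $N$ — is by comparison routine, being built into the $N$- and $T$-independence of the constants in \eqref{stima-grad} and \eqref{stima-h}.
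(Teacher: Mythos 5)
Your proposal is correct and follows essentially the same route as the paper: $T_0$ is chosen via the $N$-uniform time-Hölder estimates from the proof of Theorem \ref{teo_h_conv2} applied with $t_1=0$, the bound \eqref{stima-grad-u} comes from standard elliptic estimates with constants depending only on the $C^{1,\alpha}$-bound \eqref{sob} on the profiles, and the convergences \eqref{conv-trac-u}--\eqref{conv-trac-tu} follow from \eqref{conv-holder}, \eqref{conv-inf} and the Lipschitz continuous-dependence of the strain trace on the profile in $C^{1,\alpha}_{\#}(Q)$. The only difference is that the paper simply invokes this last estimate as Lemma \ref{A.1} (quoted from the literature), whereas you sketch its proof by flattening and Schauder theory.
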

\begin{proof}
    We begin by observing that by inserting $t_2=t$ and $t_1=0$ in \eqref{stima-h} we can write
    \begin{equation*}
        h_N(x,t)\geq \min_{x\in Q} h_0(x)-C\left[t^{\frac{p^2-4}{8p^2}}+t^{\frac{1}{2}}\right]\qquad \text{for every $(x,t)\in Q\times [0,T]$}.
    \end{equation*}
    Therefore, since $t\mapsto t^{\frac{p^2-4}{8p^2}}+t^{\frac{1}{2}}$ is an increasing function, there exists $T_1>0$ such that $h_N(x,t)\geq C_0$ for every $(x,t)\in Q\times [0,T_1]$ and for some constant $0<C_0<\min\{h_0(x):x\in Q\}$. As a consequence $h\geq C_0$. Now, we notice that in view of \eqref{stima-grad} with $t_2=t$ and $t_1=0$ we get
    \begin{equation*}
        \|Dh_N(\cdot,t)\|_{\LiQ}\leq \|Dh_N(\cdot,t)-Dh_0(\cdot)\|_{\LiQ}+\|Dh_0\|_{\LiQ}\leq Ct^{\frac{p^2-4}{8p^2}}+\|h_0\|_{C^1_{\#}(Q)},
    \end{equation*}
    and hence, by \eqref{boundDh} we can choose $T_2>0$ such that 
    \begin{equation*}
        T_2<\left(\frac{\Lambda_0-\|h_0\|_{C^1_{\#}(Q)}}{C}\right)^{\frac{8p^2}{p^2-4}},
    \end{equation*}
    from which we deduce that
    \begin{equation*}
        \sup_{t\in[0,T_2]}\|Dh_N(\cdot,t)\|_{\LiQ}<\Lambda_0.
    \end{equation*}
   By choosing $T_0:=\min\{T_1,T_2\}$ we conclude the proof of \ref{i}.

    To prove \eqref{stima-grad-u}, we use standard elliptic estimates to bound the norm of $D\ui$ in $C^{0,\alpha}(\Bar{\Omega}_{\hi})$ with a constant depending only on the $C^{1,\alpha}$-norm of $\hi$ (see \cite{FM2}). Then, \eqref{sob} implies \eqref{stima-grad-u}.

    Finally, by \eqref{conv-holder}, \eqref{conv-inf}, and Lemma \ref{A.1}, \eqref{conv-trac-u} and \eqref{conv-trac-tu} follows.   
\end{proof}

\bigskip\bigskip

\section{Existence of evolution}\label{sec5}

In this section we prove that the candidate found in Section \ref{sec4} is a solution to \eqref{syst} in the sense of the Definition \ref{def_sol} in the case of short time intervals (see Theorem \ref{teoconvfin}). 

In the next lemma, thanks to the regularity given by Theorem \ref{thm-t0}, we derive the Euler-Lagrange equation satisfied by the minimizer $(\hi,\ui)$ to the problem \eqref{min-prob}.

\begin{lemma}\label{EL-esp}
    Every minimizer $(\hi,\ui)$ of \eqref{min-prob} satisfies the following Euler-Lagrange equation
    \begin{align}\label{EL2}
    &\int_Q W(E(\ui(x,\hi(x))))\varphi\de x+\int_Q\langle D\psi(-D\hi,1),(-D\varphi,0)\rangle \de x\notag\\
  &+\frac{\varepsilon}{p}\int_Q |\Hi|^p\frac{\langle D\hi,D\varphi\rangle}{\Ji}\de x\nonumber\\
   & -\varepsilon\int_Q|\Hi|^{p-2}\Hi\Bigg[\Delta \varphi-\frac{\langle D^2\varphi D\hi,D\hi \rangle}{\Ji^2}-\frac{\Delta\hi \langle D\hi,D\varphi \rangle}{\Ji^2}\notag\\
  &\qquad\qquad\qquad\qquad\qquad\qquad\qquad\qquad\qquad\qquad\qquad-2\frac{\langle D^2\hi D\hi,D\varphi \rangle}{\Ji^2}\Bigg]\de x\nonumber\\
   &-3\varepsilon\int_Q|\Hi|^{p-2}\Hi\frac{\langle D\hi,D\varphi\rangle\langle D^2\hi D\hi,D\hi\rangle}{\Ji^4}\de x\notag\\
  &+\frac{1}{\tau_N}\int_Q\frac{\hi-\him}{\Jim}\varphi\de x=0,
\end{align}
for every $\varphi\in C_{\#}^2(Q)$.
\end{lemma}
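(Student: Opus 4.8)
The plan is to perform the standard first-variation (Euler–Lagrange) computation for the constrained minimum problem \eqref{min-prob}, using the regularity guaranteed by Theorem \ref{thm-t0} to justify that the constraint $\|Dh\|_{\LiQ}\le\Lambda_0$ is inactive at the minimizer $(\hi,\ui)$ for small times. First I would fix $\varphi\in C^2_{\#}(Q)$ and, for $|s|$ small, set $h_s:=\hi+s\varphi$. By part \ref{i} of Theorem \ref{thm-t0} we have $\sup_t\|D\hi\|_{\LiQ}<\Lambda_0$ on $[0,T_0]$, so $\|Dh_s\|_{\LiQ}\le\Lambda_0$ and $h_s>0$ for $|s|$ sufficiently small; hence $(h_s,u_{h_s})\in X$ is an admissible competitor, where $u_{h_s}$ denotes the elastic equilibrium on $\Omega_{h_s}$. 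The function $s\mapsto\Gi(h_s,u_{h_s})$ then has a minimum at $s=0$, so $\frac{\de}{\de s}\Big|_{s=0}\Gi(h_s,u_{h_s})=0$, and the lemma amounts to identifying this derivative with the left-hand side of \eqref{EL2}.

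The computation splits into three pieces according to the decomposition $\Gi=\mathcal W+\mathcal S+\Pi$. For the elastic term $\mathcal W(h,u_h)=\int_{\Omega_h}W(Eu_h)\de z$, the key point is that, since $u_h$ is the elastic \emph{equilibrium}, the inner variation in $u$ vanishes and only the domain variation contributes; differentiating $\int_{\Omega_{h_s}}W(Eu_{h_s})\de z$ in $s$ produces the boundary term $\int_Q W(E(\ui(x,\hi(x))))\varphi\de x$ (this is the Hadamard/Reynolds-type formula, and the regularity \eqref{stima-grad-u} of $D\ui$ up to $\overline{\Omega}_{\hi}$ is exactly what makes the trace $W(E\ui)$ on $\Gamma_{\hi}$ well-defined and the differentiation legitimate — see \cite{FM2}). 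For the penalization term $\Pi(h)=\frac{1}{2\tau_N}\int_Q\frac{(h-\him)^2}{\Jim}\de x$, which is an explicit quadratic in $h$ with fixed weight $\Jim$, differentiation is immediate and yields $\frac{1}{\tau_N}\int_Q\frac{\hi-\him}{\Jim}\varphi\de x$. The bulk of the work is the surface term $\mathcal S(h)=\int_Q\big(\psi(-Dh,1)+\frac{\varepsilon}{p}|H(h)|^p\sqrt{1+|Dh|^2}\big)\de x$, rewritten as an integral over $Q$ using $\nu=(-Dh,1)/\sqrt{1+|Dh|^2}$, the one-homogeneity of $\psi$, and $H=-\div\big(Dh/\sqrt{1+|Dh|^2}\big)$: the $\psi$-part gives $\int_Q\langle D\psi(-D\hi,1),(-D\varphi,0)\rangle\de x$ after an integration by parts, while the curvature part, after differentiating $|H|^p J$ in $s$ (using $\frac{\de}{\de s}|H|^p=p|H|^{p-2}H\,\dot H$ and the explicit formula for $\dot H=-\div\big(\cdot\big)$ in terms of $D\varphi$, $D^2\varphi$, $D\hi$, $D^2\hi$), integrating by parts once to move derivatives off $\varphi$, and carefully collecting the terms multiplying $\Delta\varphi$, $\langle D^2\varphi D\hi,D\hi\rangle$, $\langle D\hi,D\varphi\rangle$, produces exactly the remaining integrals in \eqref{EL2}, including the last cubic-in-$D\hi$ term with the $\Ji^4$ denominator.

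The main obstacle is the surface-energy variation: one must differentiate $|H(h_s)|^p\sqrt{1+|Dh_s|^2}$ carefully — $H$ itself is a second-order quasilinear operator in $h$, so $\dot H$ involves $\Delta\varphi$, $\langle D^2\varphi D\hi,D\hi\rangle$, and first-order terms weighted by $\Ji^{-2}$ and $\Ji^{-4}$ — and then integrate by parts against $|\Hi|^{p-2}\Hi$, which requires $p>2$ and the bound \eqref{stima-grad-u}/\eqref{Hb} together with $\hi\in\Ws$ to ensure all integrands are in $L^1(Q)$ (the factor $|\Hi|^{p-1}$ pairs with $D\hi,D^2\hi\in L^p$ via Hölder since $(p-1)/p+1/p=1$, and the periodicity of $\hi$ and $\varphi$ kills all boundary contributions on $\partial Q$). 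Matching the algebra of the resulting expression to the precise grouping displayed in \eqref{EL2} is the only genuinely delicate bookkeeping; everything else is routine once admissibility of the variations has been secured by Theorem \ref{thm-t0}.
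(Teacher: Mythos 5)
Your proposal is correct and follows essentially the same route as the paper: the variation $h_s=\hi+s\varphi$, admissibility of the competitor for small $|s|$ (the constraint $\|Dh\|_{\LiQ}\le\Lambda_0$ being inactive by Theorem \ref{thm-t0}\ref{i}), and term-by-term differentiation of $\Gi$, with the curvature part of the surface energy carrying all the computational weight. Two minor remarks: for the elastic term the paper avoids the shape derivative of the equilibrium altogether by simply testing with the \emph{fixed} displacement $\ui$ on $\Omega_{h_s}$ (since the minimization is joint in $(h,u)$), so only Fubini and the fundamental theorem of calculus are needed; and no integration by parts is actually performed to reach \eqref{EL2} --- the identity is the raw first variation, which is why $\Delta\varphi$ and $D^2\varphi$ still appear in it.
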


\begin{proof}
Since $(\hi,\ui)$ is the minimizer of \eqref{min-prob}, it satisfies
    \begin{equation}\label{der-EL}
    \frac{\de}{\de s}\left(\F(\hi(s),\ui)+\frac{1}{2\tau_N}\int_{Q}\frac{(\hi(s)-\him)^2}{\Jim}\de x\right)_{|s=0}=0
\end{equation}
for every $\varphi\in C^2_{\#}(Q)$, where $\hi(s):=\hi+s\varphi$.

By considering the penalization, we have
\begin{equation}\label{der-pen}
   \frac{\de}{\de s}\left(\frac{1}{2\tau_N}\int_{Q}\frac{(\hi(s)-\him)^2}{\Jim}\de x\right)_{|s=0}=\frac{1}{\tau_N}\int_{Q}\frac{(\hi-\him)\varphi}{\Jim}\de x.
\end{equation}
For the elastic energy, we use the definition \eqref{defoh} and we get
\begin{align}\label{der-elas}
    &\frac{\de}{\de s}\left(\int_{\Omega_{\hi+s\varphi}}W(E\ui(x,y))\de x\de y\right)_{|s=0}= \frac{\de}{\de s}\left(\int_{Q}\int_0^{\hi(s)}W(E\ui(x,y))\de y\de x\right)_{|s=0}\nonumber\\
    &=\int_Q [W(E\ui(x,\hi(x,s))\varphi(x)]_{|s=0}\de x=\int_Q W(E\ui(x,\hi(x))\varphi(x)\de x.
\end{align}
It remains to consider the surface energy. We begin by observing that
\begin{align}\label{der-psi}
    &\frac{\de}{\de s}\left(\int_{Q}\psi(-D\hi(s),1)\de x\de y\right)_{|s=0}\nonumber\\
    &=\int_Q [\langle D\psi(-D\hi(s),1),(-D\varphi,0)\rangle]_{|s=0}\de x=\int_Q \langle D\psi(-D\hi,1),(-D\varphi,0)\rangle\de x,
\end{align}
and that the regularization term can be treated in the following way
\begin{align}\label{der-h}
    &\frac{\de}{\de s}\left(\int_{\Gamma_{\hi(s)}}|H_{\hi(s)}|^p\de\H^2 \right)_{|s=0}=\frac{\de}{\de s}\left(\int_{Q}\f(s)\g(s)\de x\de y \right)_{|s=0}\nonumber\\
   & =\int_Q\left[\frac{\de \f}{\de s}(0)\g(0)+\f(0)\frac{\de \g}{\de s}(0)\right]\de x\de y,
\end{align}
where 
\begin{align*}
   &\g(s):=\sqrt{1+|D\hi(s)|^2}\quad \text{ and }\quad \f(s):=\left|\div\left(\frac{D\hi(s)}{\g(s)}\right)\right|^p
\end{align*}
Notice that
\begin{equation}
 \frac{\de \g}{\de s}(s)=\frac{1}{\g(s)}\langle D\hi(s),D\varphi\rangle\Longrightarrow \frac{\de \g}{\de s}(0)=\frac{\langle D\hi ,D\varphi\rangle}{\Ji},\label{deriv-g}
\end{equation}
and that
\begin{align}
    \frac{\de \f}{\de s}(s)=p|H_{\hi(s)}|^{p-2}&H_{\hi(s)}\frac{\de }{\de s}H_{\hi(s)} \notag\\
  &\qquad\Longrightarrow \frac{\de \f}{\de s}(0)=p|\Hi|^{p-2}\Hi\frac{\de }{\de s}(H_{\hi(s)})_{|s=0}\label{der-f}.
\end{align}
By definition we have
\begin{align*}
    H_{\hi(s)}&=-\div\left(\frac{D\hi(s)}{\g(s)}\right)=-\sum_{k=1}^2\frac{\partial}{\partial x_k}\left(\frac{\frac{\partial \hi(s)}{\partial x_k}}{\g(s)}\right)\nonumber\\
    &=-\sum_{k=1}^2\left[\frac{1}{\g(s)}\frac{\partial^2 \hi(s)}{\partial x_k^2}+\frac{\partial \hi(s)}{\partial x_k}\frac{\partial}{\partial x_k}\left(\frac{1}{\g(s)}\right)\right]\nonumber\\
    &=-\sum_{k=1}^2\left[\frac{1}{\g(s)}\frac{\partial^2 \hi(s)}{\partial x_k^2}-\frac{1}{2\g^3(s)}\frac{\partial \hi(s)}{\partial x_k}\sum_{m=1}^2\frac{\partial}{\partial x_k}\left(\frac{\partial \hi(s)}{\partial x_m}\right)^2\right]\nonumber\\
    &=-\frac{\Delta \hi(s)}{\g(s)}+\frac{\langle D^2\hi(s)D\hi(s),D\hi(s)\rangle}{\g^3(s)}:=A(s)+B(s),
\end{align*}
and hence, we can write
\begin{align}\label{A+B}
    \frac{\de}{\de s}(H_{\hi(s)})_{|s=0}=\frac{\de A}{\de s}(0)+\frac{\de B}{\de s}(0).
\end{align}
Now, we compute the derivative of $A$ and $B$ with respect to $s$. Since
\begin{align*}
    \frac{\de A(s)}{\de s}=\frac{-\Delta\varphi\g(s)+\frac{\Delta\hi(s)}{\g(s)}\langle D\hi(s),D\varphi\rangle}{\g^2(s)}=\frac{-\Delta\varphi\g^2(s)+\Delta\hi(s)\langle D\hi(s),D\varphi\rangle}{\g^3(s)},
\end{align*}
we get
\begin{equation}\label{der-A}
    \frac{\de A}{\de s}(0)=-\frac{\Delta\varphi}{\Ji}+\frac{\Delta\hi\langle D\hi,D\varphi\rangle}{\Ji^3}.
\end{equation}
Moreover, by setting
\begin{equation*}
    C(s):=\sum_{k=1}^2\frac{\partial \hi(s)}{\partial x_k}\sum_{m=1}^2\frac{\partial^2 \hi(s)}{\partial x_k\partial x_m}\frac{\partial \hi(s)}{\partial x_m},
\end{equation*}
in view of \eqref{deriv-g} we have
\begin{align}\label{deB}
     \frac{\de B}{\de s}(0)&= \frac{\de }{\de s}\left(\frac{C(s)}{\g^3(s)}\right)_{|s=0}=\frac{\frac{\de C}{\de s}(0)\g^3(0)-C(0)\frac{\de }{\de s}(\g^3(s))_{|s=0}}{\g^6(0)}\nonumber\\
     &=\frac{1}{\Ji^3}\frac{\de C }{\de s}(0)-\frac{3\langle D^2\hi D\hi,D\hi\rangle\langle D\hi(s),D\varphi\rangle}{\Ji^5}.
\end{align}
It remains to study the last term $C$:
\begin{align*}
    \frac{\de C(s)}{\de s}&=\sum_{k=1}^2\frac{\partial \varphi}{\partial x_k}\sum_{m=1}^2\frac{\partial^2 \hi(s)}{\partial x_k\partial x_m}\frac{\partial \hi(s)}{\partial x_m}+\sum_{k=1}^2\frac{\partial \hi(s)}{\partial x_k}\sum_{m=1}^2\frac{\de}{\de s}\left(\frac{\partial^2 \hi(s)}{\partial x_k\partial x_m}\frac{\partial \hi(s)}{\partial x_m}\right)\nonumber\\
    &=\langle D^2\hi(s) D\hi(s),D\varphi \rangle +\sum_{k=1}^2\frac{\partial \hi(s)}{\partial x_k}\sum_{m=1}^2\left(\frac{\partial^2 \varphi}{\partial x_k\partial x_m}\frac{\partial \hi(s)}{\partial x_m}+\frac{\partial^2 \hi(s)}{\partial x_k\partial x_m}\frac{\partial \varphi}{\partial x_m}\right)\nonumber\\
    &=\langle D^2\hi(s) D\hi(s),D\varphi \rangle +\langle D^2\varphi D\hi(s),D\hi(s) \rangle +\langle D^2\hi(s) D\varphi,D\hi(s) \rangle \nonumber\\
    &=2\langle D^2\hi(s) D\hi(s),D\varphi \rangle+\langle D^2\varphi D\hi(s),D\hi(s) \rangle,
\end{align*}
and hence
\begin{equation}\label{deCzero}
    \frac{\de C}{\de s}(0)=2\langle D^2\hi D\hi,D\varphi \rangle+\langle D^2\varphi D\hi,D\hi \rangle.
\end{equation}
By \eqref{deB} and \eqref{deCzero} we obtain
\begin{align}\label{der-B}
  \frac{\de B}{\de s}(0)=&\frac{2\langle D^2\hi D\hi,D\varphi \rangle+\langle D^2\varphi D\hi,D\hi \rangle}{\Ji^3}\notag\\
  &\qquad\qquad-\frac{3\langle D^2\hi D\hi,D\hi\rangle\langle D\hi(s),D\varphi\rangle}{\Ji^5}.
\end{align}
Finally, from \eqref{der-EL}-\eqref{der-A} and \eqref{der-B} the assertion follows.
\end{proof}

Now let $\Htn:\R^2\times [0,T_0]\rightarrow \R$ be the function defined by 
\begin{equation}\label{Hn}
    \Htn(x,t):=\Hi(x,\hi(x))\quad \text{ for $t\in[(i-1)\tau_N,i\tau_N)$},
\end{equation}
where $\Hi$ is the sum of the principal curvatures of $\Gin$. Moreover, we will denote by $|\Bi|^2$ the sum of the squares of the principal curvatures of $\Gin$, and by $\Ji$ the following quantity
\begin{equation*}
    \Ji:=\sqrt{1+|D\hi|^2},
\end{equation*}

\begin{theorem}\label{boundD2H}
    Let $T_0$ be as in Theorem \ref{thm-t0} and let $\Htn$ be given by \eqref{Hn}. Then, there exists a constant $C>0$ such that
    \begin{equation}\label{stimaD2H}
        \int_0^{T_0}\int_Q|D^2(|\Htn|^{p-2}\Htn)|^2\de x\de t\leq C
    \end{equation}
for every $N\in\N$.
\end{theorem}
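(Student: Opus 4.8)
The goal is a uniform (in $N$) $L^2(0,T_0;L^2(Q))$-bound on $D^2\big(|\Htn|^{p-2}\Htn\big)$. The natural strategy is to test the Euler--Lagrange equation \eqref{EL2} with a carefully chosen test function involving $|\Hi|^{p-2}\Hi$ itself, sum over the time-steps $i$, and exploit the telescoping of the penalization term against the energy decay \eqref{en-dec}. Concretely, the plan is as follows. First I would fix the time-step level $N$ and the index $i$, set $g_{i,N}:=|\Hi|^{p-2}\Hi$, and choose in \eqref{EL2} the test function $\varphi=\varphi_{i,N}$ built from $g_{i,N}$ — heuristically $\varphi_{i,N}\sim \Delta(g_{i,N})$ modified by the lower-order curvature corrections appearing in \eqref{EL2}, or more robustly one tests against $g_{i,N}$ after an integration by parts that moves two derivatives onto $g_{i,N}$, so that the highest-order term in \eqref{EL2} (the one containing $\Delta\varphi$ together with the $D^2\varphi$ corrections weighted by $1/\Ji^2$) produces, up to controllable errors coming from the non-constant Riemannian weight $1/\Ji$, the coercive quantity $\int_Q |D^2 g_{i,N}|^2\,\de x$. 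The admissibility of such a $\varphi$ as a $C^2_\#(Q)$ test function follows from the regularity already established: by Theorem~\ref{thm-t0}\ref{i} and \eqref{wdp}–\eqref{sob} the $\hi$ are uniformly bounded in $C^{1,(p-2)/p}_\#(Q)$ with $\hi\ge C_0>0$ and $\Ji$ uniformly bounded above and below, while elliptic regularity (\eqref{stima-grad-u}) controls $W(E\ui)$ on $\Gin$ uniformly; one then approximates $g_{i,N}\in W^{2,p}$-type regularity by smooth functions if a genuine $C^2$ competitor is needed.

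Next I would estimate each of the remaining terms of \eqref{EL2} after this choice of $\varphi$. The elastic term $\int_Q W(E(\ui(x,\hi(x))))\,\varphi_{i,N}\,\de x$ is bounded by $C\|\varphi_{i,N}\|_{L^2}\le C\|g_{i,N}\|_{H^2}$, which after a Young/interpolation inequality is absorbed into $\eta\int_Q|D^2 g_{i,N}|^2 + C_\eta\int_Q |g_{i,N}|^2$; the latter is controlled by $\|\Hi\|_{L^p}^{2(p-1)}$, hence by \eqref{Hb}. The anisotropy term $\int_Q\langle D\psi(-D\hi,1),(-D\varphi_{i,N},0)\rangle\,\de x$ is handled the same way using that $D\psi$ is bounded on the relevant cone (by one-homogeneity and $C^1$ on $\R^3\setminus\{0\}$) together with the uniform $C^1$-bound on $\hi$. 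The genuinely nonlinear curvature terms — those carrying factors $|\Hi|^{p-2}\Hi$ against $D\varphi_{i,N}$ or $\langle D^2\hi D\hi,D\hi\rangle$-type weights — are each of the form (bounded weight depending on $D\hi,D^2\hi$) $\times\, g_{i,N}\times (D\varphi_{i,N}$ or $D^2\varphi_{i,N})$, and are again absorbed by Young's inequality, using that $\int_Q |D^2\hi|^2$ is controlled (Lemma~\ref{A.3} gives $\hi$ bounded in $\Ws\hookrightarrow W^{2,2}$) and $\|g_{i,N}\|_{L^{p'}}$, $\|Dg_{i,N}\|_{L^2}$ are controlled by $\|\Hi\|_{L^p}$ and by an interpolation between $\|D^2 g_{i,N}\|_{L^2}$ and $\|g_{i,N}\|_{L^{p'}}$. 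Finally the penalization term contributes $\frac1{\tau_N}\int_Q \frac{\hi-\him}{\Jim}\,\varphi_{i,N}\,\de x$; bounding $\|\varphi_{i,N}\|_{L^2}\le C\|g_{i,N}\|_{H^2}$ and applying Young's inequality produces $\eta\|D^2 g_{i,N}\|_{L^2}^2$ plus $C_\eta \tau_N^{-2}\|\hi-\him\|_{L^2}^2$, and after multiplying by $\tau_N$ and summing over $i$ this last sum is exactly $\sum_i \tau_N^{-1}\|\hi-\him\|_{L^2}^2\le C(\Lambda_0)\F(h_0,u_0)$ by \eqref{sum}.

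Putting this together: after choosing $\eta$ small, absorbing, multiplying the $i$-th inequality by $\tau_N$ and summing $i$ over the steps covering $[0,T_0]$, the left-hand side becomes $c\sum_i \tau_N\int_Q|D^2 g_{i,N}|^2\,\de x = c\int_0^{T_0}\int_Q |D^2(|\Htn|^{p-2}\Htn)|^2\,\de x\,\de t$ by the very definition \eqref{Hn} of $\Htn$, while the right-hand side is bounded by a constant depending only on $\F(h_0,u_0)$, $\Lambda_0$, $C_0$ and the elliptic constant from \eqref{stima-grad-u} — all independent of $N$. This yields \eqref{stimaD2H}. I expect the main obstacle to be the highest-order bookkeeping in Step~1: extracting a clean coercive term $\int_Q|D^2 g_{i,N}|^2$ out of the bracketed expression in \eqref{EL2}, whose principal part is $-\varepsilon\,g_{i,N}\,[\Delta\varphi - \Ji^{-2}\langle D^2\varphi D\hi,D\hi\rangle - \dots]$, requires integrating by parts twice and carefully tracking the commutators generated by the non-constant coefficients $1/\Ji$ and $D\hi/\Ji^2$; these commutator terms involve $D^3\hi$ paired with lower derivatives of $g_{i,N}$, and one must check they are still absorbable — this is precisely where the a priori $W^{2,p}$-regularity with $p>2$ (rather than $p=2$) is used, since it gives the extra integrability needed to close the estimate in three dimensions. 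A secondary technical point is justifying the use of $\varphi_{i,N}$, which a priori is only $W^{2,p}$-regular, as an admissible test function in \eqref{EL2}; this is settled by a density argument, approximating $g_{i,N}$ in $W^{2,p}_\#(Q)$ by smooth periodic functions and passing to the limit in the (continuous) bilinear and lower-order terms.
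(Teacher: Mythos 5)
Your Step-2 architecture is essentially the paper's: the paper also tests the Euler--Lagrange equation with (tangential) second derivatives of $w:=|\Hi|^{p-2}\Hi$ (concretely, it rewrites \eqref{EL2} in the divergence form \eqref{intr2} and takes $\varphi=D_k\eta$ with $\eta=D_kw\,\Ji$, which after integration by parts is your ``test against $\Delta w$'' up to commutators), extracts the coercive term $\int_Q|D^2w|^2$ from the ellipticity of $A=\varepsilon(I-\Ji^{-2}D\hi\otimes D\hi)$, absorbs the nonlinear curvature terms by Gagliardo--Nirenberg interpolation (Lemma \ref{A.6}), Lemma \ref{A.3} and the energy bound \eqref{Hb}, and controls the penalization term exactly as you do, via \eqref{sum}/\eqref{HuLd} after summing in $i$. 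Your description of the absorption step is looser than the actual estimates \eqref{prim-2}--\eqref{prim-3} (bounding $\int|Dw|^2|D^2\hi|^2$ needs $\|D^2\hi\|_{L^{2(p-1)}}$, not just $\|D^2\hi\|_{L^2}$, and the exponents must be checked to fall strictly below $2$ in $\|D^2w\|_{L^2}$), but the plan is the right one.

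The genuine gap is that you have skipped, and mis-diagnosed, the paper's Step 1. A priori $\hi\in W^{2,p}_{\#}(Q)$ only, so $\Hi\in L^p(Q)$ and $w=|\Hi|^{p-2}\Hi\in L^{p/(p-1)}(Q)$ with \emph{no} Sobolev regularity whatsoever: $Dw$ and $D^2w$ are not functions, the quantity $\int_Q|D^2w|^2$ you want to estimate is not known to be finite, and your candidate test function $\varphi_{i,N}\sim\Delta w$ is a distribution, not an element of $C^2_{\#}(Q)$. Your proposed remedy --- ``approximate $g_{i,N}$ in $W^{2,p}_{\#}(Q)$ by smooth functions'' --- cannot work, because $g_{i,N}$ is not in $W^{2,p}$ (nor in $W^{1,1}$) to begin with; density arguments presuppose the regularity whose absence is the problem, and without a priori finiteness of $\int|D^2w|^2$ the Young-absorption step is circular. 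What is actually needed is the elliptic bootstrap of the paper's Step 1: rewrite \eqref{EL2} in the very weak form \eqref{eq-int} with $w$ paired against $A:D^2\varphi$, apply Lemma \ref{A.2} to upgrade $w$ to $L^q$ and then to $W^{1,q}_{\#}(Q)$ for every $q$ (following the argument of \cite[Theorem 3.11]{FFLM3}), deduce that $\Hi\in C^{0,\sigma}_{\#}(Q)$, and conclude by Schauder estimates that $\hi\in C^{2,\sigma}_{\#}(Q)$, whence $w\in H^2(Q)$. Only after this qualitative step does the quantitative Step-2 computation you outline become legitimate, and the resulting bound is uniform in $i,N$ because it depends only on \eqref{Hb}, $\Lambda_0$ and \eqref{HuLd}, not on the (non-uniform) constants of the bootstrap.
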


\begin{proof}
We divide the proof into two steps.
    
\textbf{Step 1} In this step we prove that for every $k\geq 1$ and $\sigma\in (0,\frac{1}{p-1})$ we have
\begin{equation*}
    |\Hi|^{p-2}\Hi\in W_{\#}^{1,q}(\Gin)\quad\text{ and }  \quad\hi\in C_{\#}^{2,\sigma}(Q).
\end{equation*}
In order to do this, we begin by showing that $\hi\in W_{\#}^{2,q}(Q)$ for every $q\geq 2$. Since $\hi$ is the solution to \eqref{min-prob}, thanks to Lemma \ref{EL-esp} it satisfies \eqref{EL2}. Now, by setting 
\begin{align}
    & w:=|\Hi|^{p-2}\Hi, \quad A:=\varepsilon\left(I-\frac{D\hi\otimes D\hi}{\Ji^2}\right),\label{A}\\
    & b:=\pi(D(\psi(-D\hi,1)))-\frac{\varepsilon}{p}|\Hi|^p D\hi\nonumber\\
    &\hspace{1cm}+\varepsilon w\left[-\frac{\Delta\hi  D\hi}{\Ji^2}-2\frac{ D^2\hi D\hi}{\Ji^2}+3\frac{\langle D^2\hi D\hi,D\hi\rangle D\hi}{\Ji^4}\right],\nonumber\\
    &c:=-W(E(\ui(x,\hi(x))))-\frac{\hi-\him}{\tau_N\Jim}\nonumber,
\end{align}
we can rephrase \eqref{EL2} as follows
\begin{equation}\label{eq-int}
    \int_Q w A:D^2\varphi\de x+\int_Q\langle b,D\varphi\rangle\de x+\int_Q c\varphi\de x=0 \qquad \text{for every $\varphi\in C^{\infty}_{\#}(Q)$}.
\end{equation}
By \eqref{wdp} and Theorem \ref{thm-t0} we have that $A\in W^{1,p}_{\#}(Q;\R^{2\times 2}_{sym})$, $b\in L^1(Q;\R^2)$ and $c\in C_{\#}^{0,\alpha}(Q)$ for some $\alpha$, and hence, since \eqref{eq-int} is in particular satisfied for every $\varphi\in C^{\infty}_{\#}(Q)$ with $\int_Q\varphi\de x=0$, we can apply Lemma \ref{A.2} to get that $w\in L^q(Q)$ for $q\in (\frac{p}{p-1},2)$. By following the same argument in \cite[Theorem 3.11]{FFLM3} we obtain $|\Hi|^{p-2}\Hi\in W^{1,q}_{\#}(Q)$ for every $q\geq 1$, then $|\Hi|^{p-1}\in C^{0,\alpha}_{\#}(Q)$ for every $\alpha\in (0,1).$ As a consequence, $\Hi\in C^{0,\sigma}_{\#}(Q)$ for every $\sigma\in (0,\frac{1}{p-1})$, and hence, by Schauder estimates we deduce that $\hi\in C^{2,\sigma}_{\#}(Q)$ for every $\sigma\in (0,\frac{1}{p-1})$.

\textbf{Step 2} In view of Lemma \ref{A.3} and the fact that $\|D\hi\|_{\LiQ}<\Lambda_0$, there exists a positive constant $C=C(q,\Lambda_0)$ such that
\begin{equation}\label{stimaD2h}
    \|D^2\hi\|_{L^q}(Q)\leq C\|\Hi\|_{L^q(Q)}.
\end{equation}
Furthermore, since $\Gin$ is of class $C^{2,\sigma}$ then $|\Hi|^{p-2}\Hi\in H^2(\Gin)$, and this implies that $|\Hi|^{p-2}\Hi\in H^2(Q)$. 

It is easy to verify (see \cite[Theorem 3.11]{FFLM3}) that $\hi$ satisfies the following Euler-Lagrange equation in intrinsic form
\begin{align}\label{intr}
   & -\varepsilon\int_{\Gin}D_{\Gin}(|\Hi|^{p-2}\Hi)D_{\Gin}\phi\de \H^2+\varepsilon\int_{\Gin}|\Hi|^{p-2}\Hi\left(|B_{i,N}|^2-\frac{1}{p}\Hi^2\right)\phi \de \H^2\nonumber\\
    &-\int_{\Gin}\left[\div_{\Gin}(D\psi(\nu_{i,N}))+W(E\ui)\right]\phi\de \H^2=\int_{\Gin}\frac{\hi-\him}{\Jim\tau_N}\phi\de \H^2,
\end{align}
where $\phi:=\frac{\varphi}{\Ji}\circ \pi$ with $\varphi\in C^2_{\#}(Q)$ such that $\int_Q\varphi\de x=0$. Now, by setting $w:=|\Hi|^{p-2}\Hi$ we can rephrase \eqref{intr} in the following way
\begin{align}\label{intr2}
    -\int_{Q}\langle ADw, D&\left(\frac{\varphi}{\Ji}\right)\rangle \Ji\de x+\varepsilon\int_{Q}w\varphi\left(|B_{i,N}|^2-\frac{1}{p}\Hi^2\right)\de x\nonumber\\
    &-\int_{Q}\left[\div_{\Gin}(D\psi(\nu_{i,N}))+W(E\ui)\right]\varphi\de x=\int_{Q}\frac{\hi-\him}{\Jim\tau_N}\varphi\de x,
\end{align}
for every $\varphi\in H^1_{\#}(Q)$ with $\int_{Q}\varphi\de x=0$, where $A$ is defined in \eqref{A}. We now use $\varphi=D_k\eta$ with $\eta\in H^2_{\#}(Q)$, and since
\begin{equation*}
    \frac{D_k\eta}{\Ji}=D_k\left(\frac{\eta}{\Ji}\right)+\frac{\eta D_k\Ji}{\Ji^2},
\end{equation*}
we can integrate by parts to get
\begin{align}\label{interm}
    \int_{Q}\langle ADw, & \hspace{2pt}D\left(\frac{D_k\eta}{\Ji}\right)\rangle \Ji\de x\nonumber\\
    &=\sum_{s,l}\int_Q a_{sl}D_l w D_sD_k\left(\frac{\eta}{\Ji}\right)\Ji\de x+\sum_{s,l}\int_Q a_{sl}D_l w D_s\left(\frac{\eta D_k\Ji}{\Ji^2}\right)\Ji\de x\nonumber\\
    &=-\sum_{s,l}\int_Q D_k(a_{sl}\Ji D_l w) D_s\left(\frac{\eta}{\Ji}\right)\de x-\sum_{s,l}\int_Q D_s(a_{sl}\Ji D_l w)\frac{\eta D_k\Ji}{\Ji^2}\de x\nonumber\\
    &=-\sum_{s,l}\int_Q D_k(a_{sl}\Ji)D_l w D_s\left(\frac{\eta}{\Ji}\right)\de x-\sum_{s,l}\int_Q a_{sl}D_kD_l w D_s\left(\frac{\eta}{\Ji}\right)\Ji\de x\nonumber\\
    &\hspace{10pt}-\sum_{s,l}\int_Q D_s(a_{sl}\Ji)D_l w \frac{\eta D_k\Ji}{\Ji^2}\de x-\sum_{s,l}\int_Q a_{sl} D_sD_l w\frac{\eta D_k\Ji}{\Ji}\de x\nonumber\\
    &=-\int_Q \langle D_k(A\Ji)D w, D\left(\frac{\eta}{\Ji}\right)\rangle\de x-\int_Q \langle AD(D_k w), D\left(\frac{\eta}{\Ji}\right)\rangle\Ji\de x\nonumber\\
    &\hspace{10pt}-\int_Q\langle\div(A\Ji),D w\rangle \frac{\eta D_k\Ji}{\Ji^2}\de x-\int_Q A: D^2 w\frac{\eta D_k\Ji}{\Ji}\de x
\end{align}

Hence, by \eqref{intr2}, \eqref{interm}, and by a density argument, we obtain that 
\begin{align*}
    \int_{Q}\langle AD(D_k w), &\hspace{2pt}D\left(\frac{\eta}{\Ji}\right)\rangle\Ji\de x\\
    &=-\int_Q \langle D_k(A\Ji)Dw, D\left(\frac{\eta}{\Ji}\right)\rangle\de x-\int_Q A:D^2 w \frac{\eta D_k\Ji}{\Ji}\de x\\
    &-\int_Q \langle \div(A\Ji),Dw\rangle\frac{\eta D_k\Ji}{\Ji^2}-\varepsilon\int_{Q}wD_k\eta\left(|B_{i,N}|^2-\frac{1}{p}\Hi^2\right)\de x\nonumber\\
    &+\int_{Q}\left[\div_{\Gin}(D\psi(\nu_{i,N}))+W(E\ui)\right]D_k\eta\de x+\int_{Q}\frac{1}{\Jim}\frac{\partial h_N}{\partial t}D_k\eta\de x,
\end{align*}
for every $\eta\in H^1_{\#}(Q)$. Therefore, by choosing $\eta=D_kw\Ji$ we obtain
\begin{align}\label{stima-k}
    &\int_{Q}\langle AD(D_k w), D(D_k w)\rangle\Ji\de x\nonumber\\
    &=-\int_Q \langle D_k(A\Ji)Dw, D(D_k w)\rangle\de x-\int_Q A:D^2 w D_kw D_k\Ji\de x\nonumber\\
    &-\int_Q \langle \div(A\Ji),Dw\rangle\frac{D_kwD_k\Ji}{\Ji}-\varepsilon\int_{Q}wD_k(D_kw\Ji)\left(|B_{i,N}|^2-\frac{1}{p}\Hi^2\right)\de x\nonumber\\
    &+\int_{Q}\left[\div_{\Gin}(D\psi(\nu_{i,N}))+W(E\ui)\right]D_k(D_kw\Ji)\de x
     \notag\\
  & +\int_{Q}\frac{1}{\Jim}\frac{\partial h_N}{\partial t}D_k(D_kw\Ji)\de x.
\end{align}

Now, by summing \eqref{stima-k} for $k=1,2$, by using Young's inequality, the ellipticity property of matrix $A$, and the estimate of $\div(A\Ji)$ in terms of $D^2\hi$, we conclude that
\begin{align}\label{prim-1}
    \int_Q |D^2 w|^2\de x\leq C\int_Q\left( |Dw|^2|D^2\hi|^2+|\Hi|^{2(p+1)}+|\Hi|^{2(p-1)}|D^2\hi|^4+\left|\frac{\partial h_N}{\partial t}\right|^2+1\right)\de x,
\end{align}
where the constant $C$ depends only on $\Lambda_0$, $D^2\psi$, and on the $C^{1,\alpha}$ bound of \eqref{stima-grad-u}. By Young's inequality, together to \eqref{stimaD2h} and Lemma \ref{A.6}, we have
\begin{align}\label{prim-2}
    \int_Q|\Hi|^{2p-2}|D^2\hi|^4\de x&\leq C\int_Q(|\Hi|^{2p+2}+|D^2\hi|^{2p+2})\de x\leq C\int_Q|\Hi|^{2p+2}\de x\nonumber\\
    &=C\int_Q|w|^{\frac{2(p+1)}{p-1}}\de x\leq C\|D^2w\|^{\frac{p+2}{p}}_{L^{\frac{p}{p-1}}(Q)}\|w\|^{\frac{p^2+p+2}{p(p-1)}}_{L^{\frac{p}{p-1}}(Q)}\nonumber\\
    &\leq \frac{1}{4}\|D^2w\|_{L^{\frac{p}{p-1}}(Q)}^2+C\|w\|^{\frac{p^2+p+2}{(p-1)(p-2)}}_{L^{\frac{p}{p-1}}(Q)}
     \notag\\
  & =\frac{1}{4}\|D^2w\|_{L^{\frac{p}{p-1}}(Q)}^2+C\|\Hi\|^{\frac{p^2+p+2}{p-2}}_{L^{p}(Q)}\nonumber\\
    &\leq \frac{1}{4}\|D^2w\|_{L^{2}(Q)}^2+C,
\end{align}
where in the last two inequalities we used \eqref{Hb} and
\begin{equation}\label{inc}
    L^2\hookrightarrow L^{\frac{p}{p-1}} 
\end{equation}
since $\frac{p}{p-1}<2$, and
\begin{equation}\label{inc2}
     \|w\|_{L^{\frac{p}{p-1}}(Q)}=\|\Hi\|_{L^p(Q)}^{p-1}.
\end{equation}
Moreover, we have
\begin{align}\label{prim-3}
    \int_Q  |Dw|^2|D^2\hi|^2\de x&\leq \|D^2h\|^2_{L^{2(p-1)}(Q)}\|Dw\|^2_{L^{\frac{2(p-1)}{p-2}}(Q)} \notag\\
  & \leq C\|w\|^2_{L^{\frac{2}{p-1}}(Q)}\left(\|D^2w\|^{\frac{p}{2(p-1)}}_{L^2(Q)}\|w\|^{\frac{p-2}{2(p-1)}}_{L^2(Q)}\right)^2\nonumber\\
    & =C\|D^2w\|^{\frac{p}{p-1}}_{L^2(Q)}\|w\|^{\frac{p}{p-1}}_{L^2(Q)} \notag\\
  & \leq C\|D^2w\|^{\frac{p}{p-1}}_{L^2(Q)}\left(\|D^2w\|^{\frac{p-2}{2p}}_{L^{\frac{p}{p-1}}(Q)}\|w\|^{\frac{p+2}{2p}}_{L^{\frac{p}{p-1}}(Q)}\right)^{\frac{p}{p-1}}\nonumber\\
    &\leq C\|D^2w\|^{\frac{3p-2}{2(p-1)}}_{L^2(Q)}\|w\|^{\frac{p+2}{2(p-1)}}_{L^{\frac{p}{p-1}}(Q)}\leq \frac{1}{4}\|D^2w\|^2_{L^2(Q)}+C\|w\|_{L^{\frac{p}{p-1}}(Q)}^{\frac{2(p+2)}{p-2}}\nonumber\\
    &=\frac{1}{4}\|D^2w\|^2_{L^2(Q)}+C\|\Hi\|^{\frac{2(p+2)(p-1)}{p-2}}_{L^p(Q)}\leq \frac{1}{4}\|D^2w\|^2_{L^2(Q)}+C,
\end{align}
where in the first inequality we used Holder's inequality, in the second and third ones we used Lemma \ref{A.6}, in the fourth and the fifth ones we used \eqref{inc} and \eqref{inc2}, and in the last one we used \eqref{Hb}.

Finally, by \eqref{prim-1}, \eqref{prim-2}, and \eqref{prim-3} we get
\begin{equation*}
    \int_Q |D^2w|^2\de x\leq C\int_Q\left(1+\left|\frac{\partial h_N}{\partial t}\right|^2\right)\de x,
\end{equation*}
and by integrating with respect to time and by using \eqref{HuLd} the assertion follows.
\end{proof}

Thanks to the bound \eqref{stimaD2H} provided by the previous theorem, in the next lemma we obtain the convergence of powers of the sum of the squares of principal curvatures $\Htn$ defined in \eqref{Hn}.  

\begin{lemma}\label{lem_conv}
    Let $\Htn$ be the function defined in \eqref{Hn}. Then
    \begin{align}
       |\Htn|^p&\xrightarrow[N\to \infty]{}|H|^p \quad \text{in $L^1(0,T_0;L^1(Q))$},\label{conv-Hp}\\
       |\Htn|^{p-2}\Htn&\xrightarrow[N\to \infty]{}|H|^{p-2}H \quad \text{in $L^1(0,T_0;L^2(Q))$}\label{conv-Hpd},
    \end{align}
    where 
    \begin{equation*}
        H:=-\div\left(\frac{Dh}{\sqrt{1+|Dh|^2}}\right)
    \end{equation*}
and $h$ is the function provided by Theorem \ref{thm-t0}.
\end{lemma}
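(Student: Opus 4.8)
The plan is to upgrade the weak compactness of the family $v_N:=|\Htn|^{p-2}\Htn$ furnished by Theorem \ref{boundD2H} to strong convergence $\Htn\to H$ in $L^p$ of space--time, where $h,T_0$ are as in Theorem \ref{thm-t0} and $H:=-\div\big(Dh/\sqrt{1+|Dh|^2}\big)$; once this is available \eqref{conv-Hp}--\eqref{conv-Hpd} follow by interpolation. By Theorem \ref{boundD2H} together with \eqref{Hb} (which, since the surface integral dominates the $\LpQ$-norm, gives $\|\Htn(\cdot,t)\|_{\LpQ}\le C$ uniformly in $t,N$), the family $\{v_N\}$ is bounded in $L^2(0,T_0;H^2_{\#}(Q))\cap L^{\infty}(0,T_0;L^{p/(p-1)}(Q))$, hence --- using $H^2(Q)\hookrightarrow C^0(Q)$ in dimension two --- also in $L^2(0,T_0;C^0_{\#}(Q))$; passing to a subsequence, $v_N\rightharpoonup v$ in $L^2(0,T_0;H^2_{\#}(Q))$ with $v\in L^2(0,T_0;H^2_{\#}(Q))\hookrightarrow L^2(0,T_0;C^0_{\#}(Q))$. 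Next I would record that, by \eqref{conv-inf}, $\htin(\cdot,t)\to h(\cdot,t)$ in $C^{1,\alpha}_{\#}(Q)$ for every $t$, while Lemma \ref{A.3} (applicable since $\|D\htin(\cdot,t)\|_{\LiQ}<\Lambda_0$ and $\|\Htn(\cdot,t)\|_{\LpQ}\le C$) shows $\{\htin(\cdot,t)\}$ bounded in $\Ws$; therefore $\htin(\cdot,t)\rightharpoonup h(\cdot,t)$ in $\Ws$, and since $D\htin(\cdot,t)/\widetilde J_N\to Dh(\cdot,t)/J$ uniformly (here $\widetilde J_N:=\sqrt{1+|D\htin|^2}$, $J:=\sqrt{1+|Dh|^2}$) while $D^2\htin(\cdot,t)\rightharpoonup D^2h(\cdot,t)$ in $\LpQ$, one gets $\Htn(\cdot,t)\rightharpoonup H(\cdot,t)$ in $\LpQ$ for a.e.\ $t$; combined with the uniform $\LpQ$-bound and dominated convergence, $\Htn\rightharpoonup H$ weakly in $L^p((0,T_0)\times Q)$.

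The core step is identifying $v$. Here I would exploit that, for a.e.\ $t$, $\htin(\cdot,t)\in\Ws$ and $v_N(\cdot,t)\in H^2_{\#}(Q)$, so that integrating by parts on the torus gives
\begin{equation*}
\int_0^{T_0}\!\!\int_Q|\Htn|^p\,\de x\,\de t=\int_0^{T_0}\!\!\int_Q v_N\,\Htn\,\de x\,\de t=\int_0^{T_0}\!\!\int_Q\Big\langle\frac{D\htin}{\widetilde J_N},Dv_N\Big\rangle\,\de x\,\de t .
\end{equation*}
Since $D\htin/\widetilde J_N\to Dh/J$ strongly in $L^{\infty}((0,T_0)\times Q)$ by \eqref{conv-inf} (and the Lipschitz continuity of $\xi\mapsto\xi/\sqrt{1+|\xi|^2}$) while $Dv_N\rightharpoonup Dv$ weakly in $L^2((0,T_0)\times Q;\R^2)$, the right-hand side tends to $\int_0^{T_0}\int_Q\langle Dh/J,Dv\rangle\,\de x\,\de t=\int_0^{T_0}\int_Q Hv\,\de x\,\de t$ (integrating back by parts, using $v(\cdot,t)\in H^1_{\#}(Q)$). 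Feeding the identity $\int\!\!\int v_N\Htn\to\int\!\!\int Hv$ into the monotonicity inequality $\int_0^{T_0}\int_Q(v_N-|\eta|^{p-2}\eta)(\Htn-\eta)\,\de x\,\de t\ge0$ and letting $N\to\infty$ (recall $v_N\rightharpoonup v$, $\Htn\rightharpoonup H$) yields $\int_0^{T_0}\int_Q(v-|\eta|^{p-2}\eta)(H-\eta)\,\de x\,\de t\ge0$ for every $\eta\in L^p((0,T_0)\times Q)$, whence $v=|H|^{p-2}H$ by Minty's trick ($\eta=H-\lambda\zeta$, $\lambda\to0^+$). In particular $\int\!\!\int|\Htn|^p\to\int\!\!\int|H|^p$, and together with $\Htn\rightharpoonup H$ in $L^p((0,T_0)\times Q)$ and the uniform convexity of $L^p$ ($p>2$) this upgrades to $\Htn\to H$ strongly in $L^p((0,T_0)\times Q)$; as the limit $|H|^{p-2}H$ is subsequence-independent, the convergence holds along the whole sequence.

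It then remains to read off the two statements. For \eqref{conv-Hp} I would use $\big||a|^p-|b|^p\big|\le C(|a|^{p-1}+|b|^{p-1})|a-b|$, Hölder's inequality and the uniform $\LpQ$-bounds, so that $\int_0^{T_0}\||\Htn(\cdot,t)|^p-|H(\cdot,t)|^p\|_{L^1(Q)}\,\de t\le C\int_0^{T_0}\|\Htn(\cdot,t)-H(\cdot,t)\|_{\LpQ}\,\de t\to0$. For \eqref{conv-Hpd}, the strong $L^p$ convergence gives $v_N\to|H|^{p-2}H$ in $L^{p/(p-1)}((0,T_0)\times Q)$ (continuity of $f\mapsto|f|^{p-2}f$ from $\LpQ$ to $L^{p/(p-1)}(Q)$); since $\rho_N:=v_N-|H|^{p-2}H$ is bounded in $L^2(0,T_0;C^0_{\#}(Q))$ (both $v_N$ and, by $v=|H|^{p-2}H$, also $|H|^{p-2}H$ lie there) and tends to $0$ in $L^{p/(p-1)}((0,T_0)\times Q)$, the interpolation $\|\rho_N(\cdot,t)\|_{\LdQ}\le\|\rho_N(\cdot,t)\|_{\LiQ}^{\frac{p-2}{2(p-1)}}\|\rho_N(\cdot,t)\|_{L^{p/(p-1)}(Q)}^{\frac{p}{2(p-1)}}$ followed by Hölder's inequality in time (the exponents are admissible precisely because $p>2$) gives $\int_0^{T_0}\|\rho_N(\cdot,t)\|_{\LdQ}\,\de t\to0$, i.e.\ \eqref{conv-Hpd}.

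The main obstacle is the identification of the weak limit $v$, that is, the passage from weak to strong convergence of the curvatures: the nonlinearity $s\mapsto|s|^{p-2}s$ blocks a direct argument, and it is circumvented exactly as above --- the integration-by-parts identity rewrites $\int|\Htn|^p$ as a pairing of a uniformly convergent vector field with the weakly convergent gradient $Dv_N$, which lets one compute $\lim\int\!\!\int|\Htn|^p$, and the monotonicity of $s\mapsto|s|^{p-2}s$ then pins down $v=|H|^{p-2}H$. Once the strong $L^p$ space--time convergence is secured, the remaining ingredients --- the pointwise-in-$t$ weak convergence of $\Htn$ and the interpolation in $L^1(0,T_0;L^1(Q))$ and $L^1(0,T_0;L^2(Q))$ --- are routine.
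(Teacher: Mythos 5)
Your proof is correct, but it takes a genuinely different route from the paper, which at this point simply defers to \cite[Corollary 3.15]{FFLM3} and \cite[Lemma 3.13]{FFLM3}. The cited argument is a slicewise compactness argument: for (a.e.) fixed $t$ the bound on $D^2(|\Htn|^{p-2}\Htn)$ makes $w_N:=|\Htn|^{p-2}\Htn$ precompact (via $H^2(Q)\hookrightarrow\hookrightarrow C^0(\overline Q)$ in two dimensions), so $w_N(\cdot,t)$ converges strongly; the continuity of the inverse of $s\mapsto|s|^{p-2}s$ then yields strong convergence of $\Htn(\cdot,t)$, the limit is identified with $H(\cdot,t)$ through the weak $\Ws$ convergence of $\htin(\cdot,t)$, and the $L^1$-in-time statements follow by dominated convergence. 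You instead work globally in space--time: the integration-by-parts identity $\int_0^{T_0}\!\!\int_Q|\Htn|^p=\int_0^{T_0}\!\!\int_Q\langle D\htin/\sqrt{1+|D\htin|^2},Dw_N\rangle$ pairs a uniformly convergent vector field with a weakly convergent gradient and thus computes $\lim\int\!\!\int|\Htn|^p$; Minty's trick identifies the weak limit of $w_N$ as $|H|^{p-2}H$; and the Radon--Riesz property of the uniformly convex space $L^p$ upgrades $\Htn\rightharpoonup H$ to strong convergence in $L^p((0,T_0)\times Q)$, from which \eqref{conv-Hp}--\eqref{conv-Hpd} follow by elementary interpolation (your exponents are admissible precisely because $p>2$). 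What each approach buys: your monotonicity argument handles the time integration in one stroke and sidesteps the delicate point of the slicewise route (Theorem \ref{boundD2H} controls $\|D^2w_N(\cdot,t)\|_{L^2(Q)}$ only in $L^2(0,T_0)$, so the fixed-$t$ compactness lemma applies for a.e.\ $t$ only along $t$-dependent subsequences, which must then be reconciled); the compactness route is more elementary and directly produces the a.e.-in-$t$ strong $L^p(Q)$ convergence of $\Htn(\cdot,t)$ that is invoked later in the proof of Theorem \ref{teoconvfin}, though this is also recoverable from your space--time convergence after extracting a further subsequence. All the supporting steps you use --- the uniform $\LpQ$ bound on $\Htn$ from \eqref{Hb}, the resulting $L^2(0,T_0;H^2_{\#}(Q))$ bound on $w_N$, the pointwise-in-$t$ weak convergence $\Htn(\cdot,t)\rightharpoonup H(\cdot,t)$ in $\LpQ$, and the hemicontinuity needed for Minty --- check out.
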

\begin{proof}
   The proof is analogous of the one in \cite[Corollary 3.15]{FFLM3} based on \cite[Lemma 3.13]{FFLM3}.
\end{proof}

Finally, we can prove the short time existence for \eqref{syst}.

\begin{proof}[Proof of Theorem~\ref{teoconvfin}]
    Fix $t\in(0,T_0)$ and let $\{i_k\}_k$ and $\{N_k\}_k$ be sequences such that $t_k:=i_k\tau_{N_k}\to t$ for $k\to +\infty$. Now, by summing \eqref{EL2} for $i=1,...,i_k$ we obtain
   \begin{align}\label{eqk}
    \int_0^{t_k}\int_Q &\Wt\varphi\de x\de t+\int_0^{t_k}\int_Q\langle D\psi(-D\hti,1),(-D\varphi,0)\rangle \de x\de t\nonumber\\
   & -\varepsilon\int_0^{t_k}\int_Q|\Ht|^{p-2}\Ht\left[\Delta \varphi-\frac{\langle D^2\varphi D\hti,D\hti \rangle}{\Ji^2}-\frac{\Delta\hti \langle D\hti,D\varphi \rangle}{\Jt^2}\right]\de x\de t\nonumber\\
   &-\varepsilon\int_0^{t_k}\int_Q|\Ht|^{p-2}\Ht\Bigg[3\frac{\langle D\hti,D\varphi\rangle\langle D^2\hti D\hti,D\hti\rangle}{\Jt^4} \notag\\
  & \qquad\qquad\qquad\qquad\qquad\qquad\qquad\qquad\qquad\qquad\qquad-2\frac{\langle D^2\hti D\hti,D\varphi \rangle}{\Jt^2}\Bigg]\de x\de t\nonumber\\
   &+\frac{\varepsilon}{p}\int_0^{t_k}\int_Q |\Ht|^p\frac{\langle D\hti,D\varphi\rangle}{\Ji}\de x\de t \notag\\
  & +\int_0^{t_k}\int_Q\frac{1}{\Jt(\cdot,\cdot-\tau_{N_k})}\frac{\partial h_{N_k}}{\partial t}\varphi\de x\de t=0,
\end{align} 
where $h_{N_k}$, $\tilde{h}_{N_k}$, and $\Ht$ are defined in \eqref{linear_h}, \eqref{constant_h}, and \eqref{Hn}, and we define 
\begin{align*}
    &\Wt(x,t):=W(Eu_{i,N_k}(x,h_{i,N_k}(x))) &&\text{for every $(x,t)\in \R^2\times [(i-1)\tau_{N_k},i\tau_{N_k})$}\\
    &\Jt(x,t):=\sqrt{1+|D\hti|^2}, &&\text{for every $(x,t)\in \R^2\times [0,T_0]$}.
\end{align*}
Now, we claim that
\begin{equation}\label{conv-V}
\frac{1}{\Jt(\cdot,\cdot-\tau_{N_k})}\frac{\partial h_{N_k}}{\partial t}\xrightharpoonup[k\to\infty]{}\frac{1}{J}\frac{\partial h}{\partial t}\quad \text{in $L^2(0,T_0;L^2(Q))$}.
\end{equation}
To prove the claim, it is enough to notice that  
\begin{align*}
    &\left|\int_0^{T_0}\int_Q\left(\frac{1}{\Jt(\cdot,\cdot-\tau_{N_k})}\frac{\partial h_{N_k}}{\partial t}-\frac{1}{J}\frac{\partial h}{\partial t}\right)\eta\de x\de t\right|\\
    &\leq \left|\int_0^{T_0}\int_Q\left(\frac{1}{\Jt(\cdot,\cdot-\tau_{N_k})}-\frac{1}{J}\right)\frac{\partial h_{N_k}}{\partial t}\eta\de x\de t\right|+\left|\int_0^{T_0}\int_Q\left(\frac{\partial h_{N_k}}{\partial t}-\frac{\partial h}{\partial t}\right)\frac{\eta}{J}\de x\de t\right|\nonumber\\
    &\leq \left\|\frac{\partial h_{N_k}}{\partial t}\right\|_{L^2(0,T_0,L^2(Q))} \left\|\frac{\eta}{\Jt(\cdot,\cdot-\tau_{N_k})}-\frac{\eta}{J}\right\|_{L^2(0,T_0,L^2(Q))}+\left|\int_0^{T_0}\int_Q\left(\frac{\partial h_{N_k}}{\partial t}-\frac{\partial h}{\partial t}\right)\frac{\eta}{J}\de x\de t\right|,
\end{align*}
for every $\eta\in L^2(0,T_0,L^2(Q))$, and hence, from \eqref{HuLd}, \eqref{convhu}, and \eqref{conv-holder} it follows \eqref{conv-V}.

Moreover, in view of Theorems \ref{teo_h_conv2} and \ref{thm-t0} we get
\begin{align}
        \int_0^{t_k}\int_Q \Wt\varphi\de x\de t&\xrightarrow[k\to\infty]{}\int_0^{t}\int_Q W(E(u(x,h(x,s),s))\varphi\de x\de s,\label{thm-conv1}\\
       \int_0^{t_k}\int_Q\langle D\psi(-D\hti,1),(-D\varphi,0)\rangle \de x\de t&\xrightarrow[k\to\infty]{}\int_0^{t}\int_Q\langle D\psi(-Dh,1),(-D\varphi,0)\rangle \de x\de s,\label{thm-conv2}
    \end{align}
and by using again Theorem \ref{teo_h_conv2}, \eqref{conv-Hp}, and \eqref{conv-Hpd}, we obtain
\begin{align}
    &\int_0^{t_k}\int_Q |\Ht|^p\frac{\langle D\hti,D\varphi\rangle}{\Ji}\de x\de t\xrightarrow[k\to\infty]{}\int_0^{t}\int_Q |H|^p\frac{\langle Dh,D\varphi\rangle}{J}\de x\de s,\label{thm-conv3}\\
    &\int_0^{t_k}\int_Q|\Ht|^{p-2}\Ht\left[\Delta \varphi-\frac{\langle D^2\varphi D\hti,D\hti \rangle}{\Ji^2}\right]\de x\de t\nonumber\\
    &\hspace{5cm}\xrightarrow[k\to\infty]{}\int_0^{t}\int_Q|H|^{p-2}H\left[\Delta \varphi-\frac{\langle D^2\varphi Dh,Dh \rangle}{J^2}\right]\de x\de s\label{thm-conv4}.
\end{align}

In order to establish the convergence of the other terms, we firstly observe that by \eqref{wdp} and \eqref{conv-holder} we have $\Delta \hti(\cdot, t)\rightharpoonup \Delta h(\cdot,t)$ for every $t\in (0,T_0)$. Moreover, by \eqref{conv-Hpd} we have for a.e.\ $t\in(0,T_0)$ we have that $(|\Ht|^{p-2}\Ht)(\cdot,t)\rightarrow (|H|^{p-2}H)(\cdot,t)$ in $L^2(Q)$, and hence
\begin{equation}\label{prim}
    \int_Q |\Ht|^{p-2}\Ht\frac{\Delta\hti \langle D\hti,D\varphi \rangle}{\Jt^2}\de x\xrightarrow[k\to\infty]{}\int_Q |H|^{p-2}H\frac{\Delta h \langle Dh,D\varphi \rangle}{
    J^2}\de x.
\end{equation}
Since by \eqref{linear_h}, \eqref{wdp}, and \eqref{conv-Hpd} we have
\begin{align}\label{lk}
    \left|\int_Q|\Ht|^{p-2}\Ht\frac{\Delta\hti \langle D\hti,D\varphi \rangle}{\Jt^2}\de x\right|&\leq C\|\Delta \hti\|_{L^2(Q)}\||\Ht|^{p-2}\Ht\|_{L^2(Q)}\nonumber\\
    &\leq C\||\Ht|^{p-2}\Ht\|_{L^2(Q)},
\end{align}
\PPP and 
\begin{equation}\label{lk2}
C\||\Ht|^{p-2}\Ht\|_{L^2(Q)}\xrightarrow[k\to \infty]{L^1(0,T_0)}\||H|^{p-2}H\|_{L^2(Q)}, 
\end{equation}
\OOO
the generalized Lebesgue dominated convergence theorem, together with \eqref{lk}\PPP, \eqref{lk2}, \OOO and \eqref{prim}, implies that
\begin{equation}\label{thm-conv5}
      \int_0^{t_k}\int_Q |\Ht|^{p-2}\Ht\frac{\Delta\hti \langle D\hti,D\varphi \rangle}{\Jt^2}\de x\de t\xrightarrow[k\to\infty]{}\int_0^t\int_Q |H|^{p-2}H\frac{\Delta h \langle Dh,D\varphi \rangle}{
    J^2}\de x\de s.
\end{equation}
For the remaining terms, by arguing in analogously we obtain
\begin{equation}\label{thm-conv6}
    \int_0^{t_k}\int_Q |\Ht|^{p-2}\Ht\frac{\langle D^2\hti D\hti,D\varphi \rangle}{\Jt^2}\de x\de t\xrightarrow[k\to\infty]{}\int_0^t\int_Q |H|^{p-2}H\frac{\langle D^2 h Dh,D\varphi \rangle}{J^2}\de x\de s,
\end{equation}
and
\begin{align}\label{thm-conv7}
      \int_0^{t_k}\int_Q |\Ht|^{p-2}\Ht&\frac{\langle D\hti,D\varphi\rangle\langle D^2\hti D\hti,D\hti\rangle}{\Jt^4}\de x\de t\nonumber\\
      &\hspace{3cm}\xrightarrow[k\to\infty]{}\int_0^t\int_Q |H|^{p-2}H\frac{\langle Dh,D\varphi\rangle\langle D^2 h Dh,Dh\rangle}{J^4}\de x\de s.
\end{align}

Finally, by \eqref{conv-V}, \eqref{thm-conv1}, \eqref{thm-conv2}, \eqref{thm-conv3}, \eqref{thm-conv4}, \eqref{thm-conv5}, \eqref{thm-conv6}, and \eqref{thm-conv7}, we pass to the limit in \eqref{eqk}, obtaining that the limiting function $h$ satisfies
\begin{align}\label{final}
    \int_0^{t}\int_Q\frac{1}{J}\frac{\partial h}{\partial t}\varphi\de x\de s&=-\int_0^{t}\int_Q W(E(u(x,h(x,s),s))\varphi\de x\de s \notag\\
  & -\int_0^{t}\int_Q\langle D\psi(-Dh,1),(-D\varphi,0)\rangle \de x\de s\nonumber\\
   & +\varepsilon\int_0^{t}\int_Q|H|^{p-2}H\Bigg[\Delta \varphi-\frac{\langle D^2\varphi Dh,Dh \rangle}{J^2}-\frac{\Delta h \langle Dh,D\varphi \rangle}{J^2}- \notag\\
  & \qquad\qquad\qquad\qquad\qquad\qquad\qquad\qquad\qquad 2\frac{\langle D^2 h Dh,D\varphi \rangle}{J^2}\Bigg]\de x\de s\nonumber\\
   &+\varepsilon\int_0^{t}\int_Q\Bigg[3|H|^{p-2}H\frac{\langle Dh,D\varphi\rangle\langle D^2 h Dh,Dh\rangle}{J^4} \notag\\
  & \qquad\qquad\qquad\qquad\qquad\qquad\qquad\qquad-\frac{1}{p}|H|^p\frac{\langle Dh,D\varphi\rangle}{J}\Bigg]\de x\de s.
\end{align} 
Now, by letting $\varphi$ vary in a countable dense subset of $H^1_{\#}(Q)$ and by differentiating \eqref{final} with respect to t, we obtain
\begin{align}\label{last}
    &-\int_Q W(E(u(x,h(x,t),t))\varphi\de x-\int_Q\langle D\psi(-Dh,1),(-D\varphi,0)\rangle \de x-\frac{\varepsilon}{p}\int_Q |H|^p\frac{\langle Dh,D\varphi\rangle}{J}\de x\nonumber\\
   & +\varepsilon\int_Q|H|^{p-2}H\left[\Delta \varphi-\frac{\langle D^2\varphi Dh,Dh \rangle}{J^2}-\frac{\Delta h \langle Dh,D\varphi \rangle}{J^2}-2\frac{\langle D^2 h Dh,D\varphi \rangle}{J^2}\right]\de x\nonumber\\
   &+3\varepsilon\int_Q|H|^{p-2}H\frac{\langle Dh,D\varphi\rangle\langle D^2 h Dh,Dh\rangle}{J^4}\de x=\int_Q\frac{1}{J}\frac{\partial h}{\partial t}\varphi\de x.
\end{align} 
Since by \eqref{stimaD2H} $|H|^{p-2}H\in L^2(0,T_0;H^2_{\#}(Q))$, by arguing as in Theorem \ref{boundD2H} we have that \eqref{last} is equivalent to  
\begin{align*}
    -\varepsilon\int_{\Gamma_h}D_{\Gamma_h}(|H|^{p-2}H)D_{\Gamma_h}\phi\de \H^2&+\varepsilon\int_{\Gamma_h}|H|^{p-2}H\left(|B|^2-\frac{1}{p}H^2\right)\phi \de \H^2\nonumber\\
    &-\int_{\Gamma_h}\left[\div_{\Gamma_h}(D\psi(\nu))+W(Eu)\right]\phi\de \H^2=\int_{\Gamma_h}V\phi\de \H^2
\end{align*}
for every $t\in(0,T_0)$, with $\phi:=\frac{\varphi}{J}$.

Now, we want to show that the energy decreases during the evolution. Thanks to \eqref{en-dec} we can say that the function $t\mapsto \F(\htin(\cdot,t),\utn(\cdot,t))$ is nonincreasing, where $\htin(\cdot,t)$ and $\utn(\cdot,t)$ are defined in \eqref{constant_h} and \eqref{constant_u}. Moreover, by using \eqref{conv-Hp} we have, up to a subsequence, that for a.e.\ $t\in(0,T_0)$ it holds $\Htn(\cdot,t)\rightarrow H(\cdot,t)$ in $L^p(Q)$. Thanks to this, we can use \eqref{conv-inf} and \eqref{conv-trac-tu} to get $\F(\htin(\cdot,t),\utn(\cdot,t))\to \F(h(\cdot,t),u(\cdot,t))$ as $N\to \infty$ for all such $t$, which implies \eqref{ugF}. Now, to prove \eqref{disF} we consider $t\in Z_0$ and $t_N\to t+$ with $t_N\notin Z_0$ as $N\to \infty$ for every $N\in \N$. Since $h(\cdot,t_N)\rightharpoonup h(\cdot,t)$ in $\Ws$ by \eqref{wdp}, by using the lower semicontinuity of $\F$ we obtain
\begin{equation*}
    \F(h(\cdot,t),u(\cdot,t))\leq \liminf_{N\to \infty}\F(h(\cdot,t_N),u(\cdot,t_N))=\lim_{N\to\infty}g(t_N)=g(t+),
\end{equation*}
which concludes the proof.
\end{proof}

\bigskip\bigskip

\section{Appendix}\label{sec7}

For the proof of the following lemmas see \cite{FFLM3}.

\begin{lemma}\label{A.1}
Let $M<0$ and $c_0>0$. Let $h_1,h_2\in C^{1,\alpha}_{\#}(Q)$ for some $\alpha\in(0,1)$, with $\|h_i\|_{C^{1,\alpha}_{\#}(Q)}\leq M$ and $h_i\geq c_0$ for $i=1,2$, and let $u_1$ and $u_2$ be the corresponding elastic equilibria of $h_1$ and of $h_2$, respectively. Then
\begin{equation*}
    \|E(u_1(\cdot,h_1(\cdot)))-E(u_2(\cdot,h_2(\cdot)))\|_{C^{1,\alpha}_{\#}(Q)}\leq C\|h_1-h_2\|_{C^{1,\alpha}_{\#}(Q)},
\end{equation*}
for some constant $C>0$ depending only on $M$, $c_0$, and $\alpha$.
\end{lemma}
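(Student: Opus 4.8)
The plan, following \cite{FFLM3}, is the standard route for Lipschitz dependence of an elliptic solution on its domain: flatten both film regions onto one fixed reference set, recast the elasticity system there as a system with H\"older continuous coefficients that depend on $h_i$ only through $h_i$ and $Dh_i$, apply Schauder estimates up to the boundary with constants controlled by $M$, $c_0$, $\alpha$ and the ellipticity of $\mathbb C$, estimate the difference of the two flattened solutions through the difference of the coefficients, and transport the resulting bound back to the graphs $\Gamma_{h_1}$, $\Gamma_{h_2}$.

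Concretely, for $i=1,2$ I would use the diffeomorphism $\Psi_i(x,s):=(x,sh_i(x))$ from $Q\times(0,1)$ onto $\Omega_{h_i}$ and set $v_i:=u_i\circ\Psi_i$. Since $h_i\in C^{1,\alpha}_{\#}(Q)$ and $h_i\ge c_0$, the fields $D\Psi_i$, $(D\Psi_i)^{-1}$ and $\det D\Psi_i=h_i$ belong to $C^{0,\alpha}(\overline{Q\times(0,1)})$ with norms bounded in terms of $M$ and $c_0$. Changing variables in the weak formulation of $\div(\mathbb C Eu_i)=0$ in $\Omega_{h_i}$, $\mathbb C Eu_i[\nu]=0$ on $\Gamma_{h_i}$, $u_i(\cdot,0)=(e_0^1x_1,e_0^2x_2,0)$, $Q$-periodic, turns it into a divergence-form system $\div(\mathbb A_i\,Dv_i)=0$ on $Q\times(0,1)$, with conormal condition $\mathbb A_i Dv_i\cdot e_3=0$ on $\{s=1\}$, Dirichlet condition on $\{s=0\}$, and $Q$-periodicity in $x$, where $\mathbb A_i$ is built algebraically from $\mathbb C$ and $D\Psi_i$, is of class $C^{0,\alpha}$, and is uniformly elliptic in the Legendre--Hadamard sense. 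Since the periodic slab $Q\times(0,1)$ has smooth boundary and no edges, the boundary Schauder theory for such elliptic systems with mixed Dirichlet/conormal data yields $v_i$ up to the boundary with the regularity needed below, uniformly: $\|v_i\|_{C^{1,\alpha}(\overline{Q\times(0,1)})}\le C(M,c_0,\alpha,\mathbb C)$. Transporting back, $E(u_i(\cdot,h_i(\cdot)))$ is an algebraic expression in $(h_i,Dh_i)$ and the trace of $Dv_i$ on $\{s=1\}$, which already gives the uniform bound used in \eqref{stima-grad-u}.

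For the difference, $w:=v_1-v_2$ solves $\div(\mathbb A_1 Dw)=-\div((\mathbb A_1-\mathbb A_2)Dv_2)$ with homogeneous Dirichlet datum on $\{s=0\}$ and conormal datum $-(\mathbb A_1-\mathbb A_2)Dv_2\cdot e_3$ on $\{s=1\}$. As $\mathbb A_i$ depends smoothly on $(h_i,Dh_i)$ evaluated on arguments bounded by $M$, $c_0$, one has $\|\mathbb A_1-\mathbb A_2\|_{C^{0,\alpha}}\le C\|h_1-h_2\|_{C^{1,\alpha}_{\#}(Q)}$, so, using the bound on $v_2$, the data of the $w$-system are of size $C\|h_1-h_2\|_{C^{1,\alpha}_{\#}(Q)}$, and a second Schauder estimate gives $\|v_1-v_2\|_{C^{1,\alpha}(\overline{Q\times(0,1)})}\le C\|h_1-h_2\|_{C^{1,\alpha}_{\#}(Q)}$. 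Writing $E(u_i(\cdot,h_i(\cdot)))=\mathcal M(h_i,Dh_i)\big[(Dv_i)(\cdot,1)\big]$ for an algebraic map $\mathcal M$, subtracting, and using both this bound and the H\"older Lipschitz dependence of $\mathcal M$ on $(h_i,Dh_i)$ together with the $C^{1,\alpha}$ bound on $v_2$, one obtains
\[
\|E(u_1(\cdot,h_1(\cdot)))-E(u_2(\cdot,h_2(\cdot)))\|\le C\big(\|v_1-v_2\|_{C^{1,\alpha}}+\|h_1-h_2\|_{C^{1,\alpha}_{\#}(Q)}\big)\le C\|h_1-h_2\|_{C^{1,\alpha}_{\#}(Q)}
\]
in the H\"older norm of the statement; the space--time statements \eqref{conv-trac-u} and \eqref{conv-trac-tu} then follow by combining this with the regularity of $h_N$ from Theorem \ref{teo_h_conv2}.

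I expect the main obstacle to be the boundary Schauder estimate invoked above: one needs it for the \emph{system} of linearized elasticity --- which satisfies the Legendre--Hadamard condition but not the scalar strong ellipticity --- with a \emph{conormal} condition on the flattened free surface $\{s=1\}$ and Dirichlet data on the opposite face, with constants depending only on $M$, $c_0$, $\alpha$ and $\mathbb C$; pinning down the precise H\"older class of the strain trace is part of this step. Closely related and equally delicate is the bookkeeping in the change of variables and in the trace formula: every term produced there must be shown to be genuinely Lipschitz in $\|h_1-h_2\|_{C^{1,\alpha}_{\#}(Q)}$ and never to involve $D^2 h_i$, so that the estimate holds under the sole hypothesis $h_i\in C^{1,\alpha}_{\#}(Q)$.
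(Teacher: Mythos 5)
The paper offers no proof of this lemma at all --- it defers to \cite{FFLM3}, whose argument (in turn based on \cite{FM2}) is precisely the flatten-to-a-fixed-slab change of variables followed by Agmon--Douglis--Nirenberg/Schauder estimates for the transformed Legendre--Hadamard elliptic system with mixed Dirichlet/conormal data that you describe --- so your proposal is correct and takes essentially the same route. The only caveat is that your argument (correctly) yields the strain-trace difference in $C^{0,\alpha}_{\#}(Q)$ rather than $C^{1,\alpha}_{\#}(Q)$; like the condition $M<0$, the exponent in the left-hand norm of the statement appears to be a misprint, and the $C^{0,\alpha}$ conclusion is the one consistent with the bound \eqref{stima-grad-u}.
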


\begin{lemma}\label{A.2}
    Let $p>2$, $u\in L^{\frac{p}{p-1}}(Q)$ such that
    \begin{equation*}
       \int_Q u A:D^2\varphi\de x+\int_Q\langle b,D\varphi\rangle\de x+\int_Q c\varphi\de x=0 
    \end{equation*}
for every $\varphi\in C^{\infty}_{\#}(Q)$ with $\int_Q \varphi\de x=0$, where $A\in W^{1,p}_{\#}(Q;\R^{2\times 2}_{sym})$ satisfies standard uniform elliptic condition \eqref{suec}, $b\in L^1(Q;\R^2)$ and $c\in L^1(Q)$. Then $u\in L^q(Q)$ for every $q\in (1,2)$. Moreover, if $b,u\div A\in L^r(Q;\R^2)$ and $c\in L^r(Q)$ for some $r>1$, then $u\in W^{1,r}_{\#}(Q)$.
\end{lemma}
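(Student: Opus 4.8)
The plan is to prove the lemma by a duality argument, treating the two assertions in turn. The crucial structural remark is that, since $p>2$ and $Q\subset\R^2$, the embedding $W^{1,p}(Q)\hookrightarrow C^{0,1-2/p}(\overline Q)$ makes $L\varphi:=A:D^2\varphi$ a uniformly elliptic operator with Hölder continuous (a fortiori VMO) coefficients; hence the periodic $W^{2,s}$ Calderón--Zygmund estimates for non-divergence operators, the corresponding $L^s$ estimates for divergence form operators, and the strong maximum principle are all at one's disposal.

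For the first assertion one fixes $q\in(1,2)$ with conjugate exponent $q'\in(2,\infty)$ (the statement being trivial for $q\le\frac{p}{p-1}$, and the hypothesis $u\in L^{p/(p-1)}(Q)$ entering below only through the resulting finiteness of $\int_Q u\,\de x$) and argues by duality against $L^{q'}_{\#}(Q)$. For $\varphi\in C^\infty_{\#}(Q)$ put $g:=A:D^2\varphi-\varphi$; by the maximum principle and the Calderón--Zygmund estimate $\|\varphi\|_{W^{2,q'}_{\#}(Q)}\le C\|g\|_{L^{q'}(Q)}$, the map $L-\mathrm{id}\colon W^{2,q'}_{\#}(Q)\to L^{q'}_{\#}(Q)$ is an isomorphism, so the functions $g$ so obtained span a dense subspace of $L^{q'}_{\#}(Q)$. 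Testing the identity in the statement with the admissible function $\varphi-|Q|^{-1}\int_Q\varphi\,\de x$, using $A:D^2\varphi=g+\varphi$ and the embedding $W^{2,q'}_{\#}(Q)\hookrightarrow C^1(\overline Q)$ (valid since $q'>2$), one arrives at
\begin{equation*}
\Bigl|\int_Q u\,g\,\de x\Bigr|\le C\bigl(\|u\|_{L^{p/(p-1)}(Q)}+\|b\|_{L^1(Q)}+\|c\|_{L^1(Q)}\bigr)\,\|g\|_{L^{q'}(Q)};
\end{equation*}
by density and $u\in L^1(Q)$ this forces $u\in L^q(Q)$.

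For the second assertion one uses the pointwise identity $A:D^2\varphi=\div(AD\varphi)-(\div A)\cdot D\varphi$, valid because $A$ is symmetric and lies in $W^{1,p}_{\#}$, to rewrite the equation as
\begin{equation*}
\int_Q u\,\div(AD\varphi)\,\de x=\int_Q\langle F,D\varphi\rangle\,\de x-\int_Q c\,\varphi\,\de x,\qquad F:=u\,\div A-b\in L^r(Q;\R^2),
\end{equation*}
for all $\varphi\in C^\infty_{\#}(Q)$, which exhibits $u$ as a \emph{very weak} solution of the divergence form equation $-\div(ADu)=\div F+c$ with $L^r$ data. One then lets $w\in W^{1,r}_{\#}(Q)$ be the genuine weak solution of the same equation — existence and uniqueness coming from the $L^r$ theory for divergence form operators with VMO coefficients, noting that $L^r(Q)\hookrightarrow(W^{1,r'}_{\#}(Q))^*$ in dimension $2$ — and notes that $w$, too, solves the equation very weakly. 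Thus $v:=u-w$, which lies in $L^{q_0}(Q)$ for some $q_0\in(\frac{p}{p-1},2)$, satisfies $\int_Q v\,\div(AD\varphi)\,\de x=0$ for all $\varphi\in C^\infty_{\#}(Q)$. Given $g\in C^\infty_{\#}(Q)$ with $\int_Q g\,\de x=0$, one solves the dual divergence form problem $\div(AD\varphi)=g$ (solvable precisely because $\int_Q g\,\de x=0$); since $A\in W^{1,p}_{\#}$ the solution actually belongs to $W^{2,p}_{\#}(Q)$, which — together with $v\in L^{p/(p-1)}(Q)$ — makes the pairing legitimate and, after approximating $\varphi$ by smooth periodic functions, yields $\int_Q v\,g\,\de x=0$ for every such $g$. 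Hence $v$ is constant and $u=w+v\in W^{1,r}_{\#}(Q)$.

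The hard part will be the two functional analytic points specific to the periodic torus: the well-posedness of the auxiliary non-divergence problem in the first step — handled above by inserting the zeroth order term, though one could instead invoke the Fredholm alternative together with the positive, two-sided bounded invariant density $m$ solving $\sum_{i,j}\partial_{ij}(a_{ij}m)=0$ — and the uniqueness of very weak solutions in the second step, which forces the dual divergence form problem to be solved with exactly the $W^{2,p}$ regularity needed to make the pairing $\int_Q v\,\div(AD\varphi)\,\de x$ meaningful, and hence a careful use of the $L^s$ theory for operators with merely VMO coefficients. Everything else reduces to Sobolev embeddings in dimension $2$ (where $p>2$) combined with Hölder's inequality.
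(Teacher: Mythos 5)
The paper does not actually prove this lemma: it is stated in the appendix with the remark that the proof is to be found in \cite{FFLM3} (where it appears, with essentially the same hypotheses, among the auxiliary elliptic results). Your argument follows the same standard route as the cited proof --- duality against the adjoint non-divergence problem with Calder\'on--Zygmund estimates for the $L^q$ summability, then a rewriting in divergence form plus uniqueness of very weak solutions for the $W^{1,r}$ regularity --- and I find no essential gap; the observation that $A\in W^{1,p}_{\#}$ with $p>2$ in dimension $2$ gives H\"older continuous coefficients is exactly what makes the elliptic machinery available, and your insertion of the zeroth-order term to sidestep the solvability obstruction of $A:D^2\varphi=g$ on the torus is a clean device. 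One point to tighten in the second step: since the original identity is only assumed for test functions with $\int_Q\varphi\,\de x=0$, the divergence-form identity you derive holds for all $\varphi\in C^{\infty}_{\#}(Q)$ only after replacing $c$ by $c-\fint_Q c\,\de x$ (taking $\varphi\equiv 1$ would otherwise force $\int_Q c\,\de x=0$, which is not assumed); this same normalization is what guarantees solvability of the auxiliary divergence-form problem for $w$ on the torus, and it is harmless for the conclusion because in the final duality step you only use zero-average data $g$, so $v=u-w$ is still forced to be constant. With that adjustment, and spelling out that the dense range of $L-\mathrm{id}$ is dense in $L^{p}(Q)$ as well (so that the $L^q$ function produced by Hahn--Banach actually coincides with $u\in L^{p/(p-1)}(Q)$), the proof is complete.
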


In the next lemma we denote by $Lu$ an elliptic operator of the form
\begin{equation}\label{op-L}
    Lu:=\sum_{i,j}a_{ij}(x)D_{ij}u+\sum_{i}b_i(x)D_iu
\end{equation}
where all the coefficients are $Q$-periodic functions, the $a_{ij}$ are continuous, and the $b_i$ are bounded. Moreover, there exist $\lambda,\Lambda>0$ such that
\begin{equation}\label{suec}
    \Lambda|\xi|^2\geq \sum_{i,j}a_{ij}\xi_i\xi_j\geq \lambda|\xi|^2
\end{equation}
for every $\xi\in\R^2$, $\sum_{i}|b_i|\leq \Lambda$.

\begin{lemma}\label{A.3}
Let $p\geq 2$. Then, there exists $C > 0$ such that for all $u\in  \Ws$ we have
\begin{equation*}
    \|D^2 u\|_{\LpQ}\leq C\|Lu\|_{\LpQ},
\end{equation*}
where $L$ is the differential operator defined in \eqref{op-L}. The constant $C$ depends only on $p$, $\lambda$, $\Lambda$, and the moduli of continuity of the coefficients $a_{ij}$.
\end{lemma}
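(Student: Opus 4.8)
The plan is to reduce the variable-coefficient periodic estimate to the classical constant-coefficient Calder\'on--Zygmund estimate on the torus in three steps: first handle a frozen operator by Fourier multipliers, then pass to variable coefficients by a freezing/partition-of-unity argument producing a lower-order remainder, and finally remove that remainder by a compactness argument exploiting the fact that constants lie in the kernel of $L$.

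\emph{Step 1 (frozen operator).} For $L_0:=\sum_{ij}a_{ij}^0D_{ij}$ with $(a_{ij}^0)$ symmetric and satisfying \eqref{suec}, I would expand a $Q$-periodic $u$ in Fourier series and note that, for every nonzero frequency $k$, the Hessian Fourier coefficients are those of $L_0u$ multiplied by $m_{ij}(k)$, where $m_{ij}(\xi):=\xi_i\xi_j/\bigl(\sum_{lm}a^0_{lm}\xi_l\xi_m\bigr)$. Ellipticity makes $m_{ij}$ a smooth, $0$-homogeneous symbol on $\R^2\setminus\{0\}$ satisfying Mikhlin's derivative bounds with constants depending only on $\lambda,\Lambda$, so the associated periodic multiplier operator is bounded on $\LpQ$ for $1<p<\infty$ (Mikhlin's theorem on the torus, or transference from $\R^2$). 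Since $D^2$ and $L_0$ annihilate constants and $\int_QL_0u=0$ by periodicity, this yields $\|D^2u\|_{\LpQ}\le C(p,\lambda,\Lambda)\|L_0u\|_{\LpQ}$ for all $u\in\Ws$.

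\emph{Step 2 (variable coefficients, with remainder).} Given $\delta>0$, uniform continuity of the $a_{ij}$ on the torus provides $\rho>0$ such that each $a_{ij}$ oscillates by less than $\delta$ on $\rho$-balls. I would cover $Q$ by finitely many balls $B_\rho(x_l)$, choose a subordinate partition of unity $\{\zeta_l\}$ with $\sum_l\zeta_l\equiv1$, apply Step 1 to each $\zeta_lu$ with the operator frozen at $x_l$, and estimate the resulting commutator together with $\sum_{ij}(a_{ij}(x_l)-a_{ij}(\cdot))D_{ij}(\zeta_lu)$ and the $\sum_ib_iD_iu$ term (recall $\sum_i|b_i|\le\Lambda$ from \eqref{suec}) by $C\delta\|D^2u\|_{\LpQ}+C(\|Du\|_{\LpQ}+\|u\|_{\LpQ}+\|Lu\|_{\LpQ})$. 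Summing over $l$, taking $\delta$ small to absorb the $C\delta\|D^2u\|_{\LpQ}$ term, and using the interpolation inequality $\|Du\|_{\LpQ}\le\epsilon\|D^2u\|_{\LpQ}+C_\epsilon\|u\|_{\LpQ}$ on $Q$ with $\epsilon$ small, I obtain
\[
\|D^2u\|_{\LpQ}\le C\bigl(\|Lu\|_{\LpQ}+\|u\|_{\LpQ}\bigr),
\]
with $C$ depending on $p,\lambda,\Lambda$ and on the $a_{ij}$ only through their moduli of continuity (through $\rho$ and the number of balls).

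\emph{Step 3 (removing $\|u\|_{\LpQ}$), and the main obstacle.} I would argue by contradiction: if the estimate of the lemma fails, there are $u_n\in\Ws$ with $\|D^2u_n\|_{\LpQ}=1$, $\int_Qu_n=0$ (subtracting the mean alters neither $D^2u_n$ nor $Lu_n$, since $L$ kills constants), and $\|Lu_n\|_{\LpQ}\to0$. The periodic Poincar\'e inequality, applied to $Du_n$ (which has zero mean by periodicity) and then to $u_n$, shows $\{u_n\}$ is bounded in $\Ws$; so, up to a subsequence, $u_n\rightharpoonup u$ in $\Ws$ and $u_n\to u$ in $W^{1,p}_\#(Q)$ by Rellich. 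Since $a_{ij}\in L^\infty$ (so $a_{ij}D_{ij}u_n\rightharpoonup a_{ij}D_{ij}u$ in $\LpQ$) and $b_iD_iu_n\to b_iD_iu$ in $\LpQ$, we get $Lu_n\rightharpoonup Lu$, hence $Lu=0$. Because $p\ge2$ equals the space dimension, the strong maximum principle for $W^{2,p}$-solutions forces $u$ to be constant, and the normalization $\int_Qu=0$ gives $u\equiv0$; then Step 2 yields $1=\|D^2u_n\|_{\LpQ}\le C(\|Lu_n\|_{\LpQ}+\|u_n\|_{\LpQ})\to0$, a contradiction. The genuinely delicate point is this last step: it requires a maximum (or at least uniqueness) principle valid for merely continuous coefficients and for Sobolev solutions — which is where the hypothesis $p\ge2$ enters — and the observation that constants, lying in $\ker L$ with vanishing Hessian, are exactly what makes the lower-order term of Step 2 disappear along the sequence. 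Steps 1 and 2 are otherwise routine singular-integral and freezing-of-coefficients arguments, although the bookkeeping in Step 2 must be carried out carefully so that the final constant depends on the $a_{ij}$ only through their moduli of continuity, as asserted.
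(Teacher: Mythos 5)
Your three-step strategy is sound and is essentially the standard one; note that the paper does not prove Lemma \ref{A.3} at all but refers to \cite{FFLM3}, where the argument is precisely your Steps 2--3 with the Fourier-multiplier Step 1 replaced by a citation of the interior $L^p$ estimates of Gilbarg--Trudinger. Your identification of where $p\geq 2$ enters (the strong maximum principle for $W^{2,n}_{loc}$ strong solutions with merely continuous leading coefficients, $n=2$ here, forcing periodic solutions of $Lu=0$ to be constant) is exactly right, and the passage $Lu_n\rightharpoonup Lu$ and the absorption of lower-order terms are handled correctly.

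There is, however, one genuine shortfall relative to the statement as given. The lemma asserts that $C$ depends \emph{only} on $p$, $\lambda$, $\Lambda$, and the moduli of continuity of the $a_{ij}$, and this uniformity is actually used in the paper: in \eqref{stimaD2h} the operator $L$ changes with $i$ and $N$ (its coefficients are built from $D\hi$), and the constant is claimed to depend only on $q$ and $\Lambda_0$. Your Step 3, as written, fixes a single operator $L$ and argues by contradiction for that $L$; the resulting constant therefore depends on $L$ itself in a non-quantitative way, not merely on its ellipticity constants and moduli of continuity. The repair is routine but must be stated: run the contradiction over a sequence of operators $L_n$ sharing the same $\lambda$, $\Lambda$, and moduli of continuity, extract a uniformly convergent subsequence of the $a_{ij}^n$ by Arzel\`a--Ascoli and a weak-$*$ limit of the $b_i^n$ in $L^\infty$, and observe that $a^n_{ij}D_{ij}u_n\rightharpoonup a_{ij}D_{ij}u$ (uniform convergence times weak convergence) and $b^n_iD_iu_n\rightharpoonup b_iD_iu$ (weak-$*$ times strong), so that the limit function is annihilated by a limit operator still satisfying \eqref{suec}; the rest of your argument then goes through, with the Step 2 constant already uniform over the class. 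Without this modification you have proved a weaker lemma than the one the paper relies on.
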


\begin{lemma}\label{A.6} 
Let $\Omega\subset \R^n$ be a bounded open set satisfying the cone condition. Let $s$, $j$ , and $m$ be integers such that $0\leq s\leq j\leq m$. Let $1\leq p\leq q < \infty$ if $(m-j)p\geq n$, and let $1\leq p\leq q \leq \infty$ if $(m-j)p>n$. Then, there exists $C > 0$ such that
\begin{equation}\label{interp}
    \|D^j f\|_{\LqQ}\leq C\left(\|D^m f\|_{\LpQ}^{\theta}\|D^s f\|_{\LpQ}^{1-\theta}+\|D^s f\|_{\LpQ}\right)
\end{equation}
for every $f\in W^{m,p}(\Omega)$, where
\begin{equation*}
    \theta:=\frac{1}{m-s}\left(\frac{n}{p}-\frac{n}{q}+j-s\right).
\end{equation*}
Moreover, if $\Omega$ is a cube, $f\in W^{m,p}_{\#}(Q)$ and, if either $f$ vanishes at the boundary or $\int_{\Omega}f \de x=0$, then \eqref{interp} holds in the stronger form
\begin{equation*}
    \|D^j f\|_{\LqQ}\leq C\|D^m f\|_{\LpQ}^{\theta}\|D^s f\|_{\LpQ}^{1-\theta}.
\end{equation*}
\end{lemma}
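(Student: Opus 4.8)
This is the classical Gagliardo--Nirenberg interpolation inequality for intermediate derivatives, and I would prove it along Nirenberg's original lines, in the streamlined form given in Adams--Fournier, \emph{Sobolev Spaces} (the interpolation inequality for intermediate derivatives), which is essentially what \cite{FFLM3} invokes. The plan is: first a one-dimensional calculus estimate, then tensorization and induction on the differentiation order to reach a cube, then a covering argument to reach a general $\Omega$ with the cone condition, and finally a Poincar\'e-type refinement on a cube. \textbf{Stage 1 (one-dimensional seed).} On a bounded interval $I=(0,L)$ and for $g\in W^{2,p}(I)$ I would establish
\[
\|g'\|_{L^q(I)}\ \le\ C\,\|g''\|_{L^p(I)}^{1/2}\|g\|_{L^p(I)}^{1/2}+C\,L^{-1}\|g\|_{L^p(I)},
\]
together with its $L^\infty$ endpoint when $p>1$. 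The proof is the standard pointwise argument: for $x\in I$ and any subinterval $J\ni x$ of length $\ell\le L$ one writes $g'$ at a suitable interior point of $J$ as a divided difference of $g$ and then adds $\int_J|g''|$, getting $|g'(x)|\le (C/\ell)\int_J|g|+\int_J|g''|$; one then takes $L^q$-norms in $x$, applies Holder and Minkowski's integral inequality, and optimizes the free length $\ell$ over $(0,L]$.

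\textbf{Stages 2--3 (cube, then general domain).} Applying Stage 1 coordinate by coordinate and iterating on the order (this is the log-convexity of $k\mapsto\log\|D^kf\|$), one obtains the inequality on a cube $Q_R\subset\R^n$ of side $R$ with powers of $R$ in front of the two terms dictated by scaling; the constraints linking $p$ and $q$ and the value of $\theta$ are precisely those making the scaling balance and forcing the exponent $1-\theta\ge 0$ of $\|D^sf\|$ in the multiplicative term, i.e. $\theta\le 1$ (the endpoint $q=\infty$ requires $(m-j)p>n$, to avoid the failure of the critical Sobolev embedding). Then, using the cone condition, one covers $\Omega$ by cubes $\{Q_k\}$ of a fixed side $\rho$ with uniformly bounded overlap, applies the cube estimate on each (the $R=\rho$ scaling constants now absorbed into $C$), raises to the $q$-th power, sums over $k$, and recombines through the bounded overlap together with discrete Holder/Young inequalities; this turns $\sum_k\|D^mf\|_{L^p(Q_k)}^{\theta q}\|D^sf\|_{L^p(Q_k)}^{(1-\theta)q}$ back into $\|D^mf\|_{L^p(\Omega)}^{\theta}\|D^sf\|_{L^p(\Omega)}^{1-\theta}$ at the price of the additive remainder $\|D^sf\|_{L^p(\Omega)}$, yielding \eqref{interp} (with the $\ell^q$-sum replaced by a supremum when $q=\infty$).

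\textbf{Stage 4 (sharp form on a cube).} When $\Omega$ is a cube $Q$ and $f$ either vanishes at $\partial Q$ or satisfies $\int_Q f\,\de x=0$, the additive term can be absorbed: for a zero-extension one lets $R\to\infty$ in the cube estimate so that the coefficient of the lower-order term vanishes while the global norms stay fixed; alternatively one uses an iterated Poincar\'e inequality $\|D^sf\|_{L^p(Q)}\le C\|D^mf\|_{L^p(Q)}$ coming from the zero-mean or zero-trace condition, or a compactness (Ehrling-type) argument. This removes $\|D^sf\|_{L^p}$ from the right-hand side and gives the purely multiplicative bound.

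The only genuinely delicate point is Stage 3: choosing a covering compatible with the cone and, above all, the combinatorial Holder/Young bookkeeping that converts a sum of \emph{products} of local norms into a single product of global norms while keeping the unavoidable lower-order remainder under control --- this is exactly why the additive $\|D^sf\|_{L^p(\Omega)}$ is present for a general domain and can be dropped only on a cube under the zero-mean or zero-boundary hypothesis. Everything else is the one-dimensional estimate of Stage 1 propagated upward; since all of this is carried out in Adams--Fournier and reproduced in \cite{FFLM3}, the proof is a matter of assembling these standard ingredients.
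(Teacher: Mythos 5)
Your proposal is correct in outline: the paper does not prove this lemma at all, but simply defers it to \cite{FFLM3} (which in turn relies on the classical Gagliardo--Nirenberg interpolation theory as in Adams--Fournier and Nirenberg), and your four-stage sketch --- one-dimensional seed estimate, tensorization and induction on the order, covering argument under the cone condition, and Poincar\'e absorption of the additive term on a cube with zero mean or zero trace --- is precisely the standard argument those references contain. The only imprecision is that the exponents $1/2$ in your Stage 1 display are correct only for $q=p$; the general $q$ is recovered by combining that case with the Sobolev embedding, which your scaling discussion in Stages 2--3 implicitly accounts for.
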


\bigskip\bigskip

{\begin{acknowledgements}
The authors acknowledge the financial support received from the Austrian Science Fund (FWF) through the project TAI 293. Furthermore, the authors are members of the Italian ``Gruppo Nazionale per l'Analisi Matematica, la Probabilit\`a e le loro Applicazioni'' (GNAMPA-INdAM) and have received funding from the GNAMPA 2022 project CUP: E55F22000270001 and GNAMPA 2023 project CUP: E53C22001930001. Moreover,  P. Piovano thanks for the funding received from the Austrian Science Fund (FWF) through the project P 29681 and from BMBWF through the OeAD-WTZ project HR 08/2020, and is grateful for the support received as \emph{Visitor in the Theoretical Visiting Sciences Program} (TSVP) at the Okinawa Institute of Science and Technology (OIST), Japan. Finally, P. Piovano  acknowledges the support obtained by the Italian Ministry of University and Research  (MUR) through  the PRIN Project ``Partial differential equations and related geometric-functional inequalities''. 
\end{acknowledgements}}

\bigskip\bigskip\bigskip\bigskip

\end{document}